\DeclareMathAlphabet{\mathscr}{OMS}{cmsy}{m}{n} %mathscr gives the old mathcal
\setlist{nolistsep} % Reduce spacing between bullet points and numbered lists
\title{Compatibility of t-structures in a semiorthogonal decomposition}
\let\runtitle\@title
\DeclarePairedDelimiter\abs{\lvert}{\rvert}
\DeclarePairedDelimiter\norm{\lVert}{\rVert}
\let\oldabs\abs
\def\abs{\@ifstar{\oldabs}{\oldabs*}}
\let\oldnorm\norm
\def\norm{\@ifstar{\oldnorm}{\oldnorm*}}
\newtheoremstyle{plainmod}% name of the style to be used
{}% measure of space to leave above the theorem. E.g.: 3pt
{}% measure of space to leave below the theorem. E.g.: 3pt
{\it}% name of font to use in the body of the theorem
{}% measure of space to indent
{}% name of head font
{}% punctuation between head and body
{ }% space after theorem head; " " = normal interword space
{\textbf{\thmnumber{#2}.\thmname{\hspace{1ex}#1}}\thmnote{\textbf{ -- #3}}\textbf{.}}
\newtheoremstyle{definitionmod}% name of the style to be used
{}% measure of space to leave above the theorem. E.g.: 3pt
{}% measure of space to leave below the theorem. E.g.: 3pt
{}% name of font to use in the body of the theorem
{}% measure of space to indent
{}% name of head font
{}% punctuation between head and body
{ }% space after theorem head; " " = normal interword space
{\textbf{\thmnumber{#2}.\thmname{\hspace{1ex}#1}}\thmnote{\textbf{ -- #3}}\textbf{.}}
\newtheoremstyle{remarkmod}% name of the style to be used
{}% measure of space to leave above the theorem. E.g.: 3pt
{}% measure of space to leave below the theorem. E.g.: 3pt
{}% name of font to use in the body of the theorem
{}% measure of space to indent
{}% name of head font
{}% punctuation between head and body
{ }% space after theorem head; " " = normal interword space
{\textsc{\thmnumber{#2}.\thmname{\hspace{1ex}#1}}\thmnote{\textsc{ -- #3}}\textit{.}}
\numberwithin{equation}{section}
\theoremstyle{plainmod} %Titolo grassetto e testo italico
\newtheorem{thm}[equation]{Theorem}
\newtheorem{cor}[equation]{Corollary}
\newtheorem{lem}[equation]{Lemma}
\newtheorem{prop}[equation]{Proposition}
\theoremstyle{definitionmod} %Titolo grassetto
\newtheorem{defn}[equation]{Definition}
\newtheorem{es}[equation]{Example}
\newtheorem{rem}[equation]{Remark}
\newtheorem{nota}[equation]{Notation}
\newtheorem{defnp}[equation]{Definition/Proposition}
\let\expandafter\oldproof\csname\string\proof\endcsname
\let\oldendproof\endproof
\renewenvironment{proof}[1][\proofname]{%
  \oldproof[\scshape #1.]%
}{\oldendproof}
\newcommand{\id}{\operatorname{id}}
\newcommand{\cone}{\operatorname{C}}
\newcommand{\im}{\operatorname{im}}
\newcommand{\der}{{\operatorname{D}^{\operatorname{b}}}}
\newcommand{\T}{\mathcal{T}}
\newcommand{\coker}{\operatorname{coker}}
\newcommand{\ext}{\operatorname{Ext}}
\newcommand{\rdim}{\operatorname{rdim}}
\renewcommand{\hom}{\operatorname{Hom}}
\newcommand{\gr}{\operatorname{gr}}
\begin{document}
\titlepage
\begin{center}
{\LARGE \textbf{Compatibility of t-structures in a semiorthogonal decomposition}}
\par\vspace{1ex}
\textit{Antonio Lorenzin}
\par \vspace{1ex}
{\footnotesize Dipartimento di Matematica "F. Casorati", Università degli Studi di Pavia, Via Ferrata 5, 27100 Pavia, Italy
\par\vspace{-0.5ex}
E-mail address: \texttt{a.lorenzin@campus.unimib.it}}

 \hfill Pavia, \today
\end{center}
\tableofcontents
\par\vspace{5ex}
\noindent {\large \textbf{Abstract}}\hspace{2ex} We describe how to obtain a global t-structure from a semiorthogonal decomposition with compatible t-structures on every component. 
This result is used to generalize a well-known theorem of Bondal on full strong exceptional sequences.
\par\vspace{2ex}

\noindent {\large \textbf{Keywords}}\hspace{2ex} triangulated categories;  t-structures;  semiorthogonal decomposition;  exceptional sequences;  filtered enhancement.
\par\vspace{2ex}

\noindent {\large \textbf{Acknowledgements}}\hspace{2ex} I am very thankful to my supervisor Alberto Canonaco for his suggestions and corrections. I would also like to thank  Irakli Patchkoria and Xiao-Wu Chen, who pointed out the existence of Hubery's result \cite[Theorem 3.2]{hubery} after the first preliminary version of this article. I am grateful to the referee who provided insightful comments.

\phantomsection
\addcontentsline{toc}{section}{Introduction}
\section*{Introduction}

The notion of algebraic triangulated categories has raised great attention in the last decades. These particular triangulated categories are described in several equivalent ways: they admit an enhancement by dg categories or $A_{\infty}$-categories; alternatively, they are obtained as the stable category of a Frobenius category.
The most important examples are derived categories (assuming they are categories).\footnote{Here, and in the following, category is used to mean \emph{locally small category}.}
Interestingly, many results can be extended from derived categories to algebraic triangulated categories. We focus on the following.
\par\vspace{1ex}\noindent 
\textbf{Theorem -- Bondal.} \cite[Theorem 6.2]{bondal} \textit{Assume that the bounded derived category $\der(X)$ of coherent sheaves on a smooth manifold $X$ is generated by a strong exceptional sequence $\langle E_1 , \dots , E_n \rangle$. Then $\der(X)$ is equivalent to the bounded derived category $\der(\operatorname{mod-}A)$ of right finite-dimensional modules over the algebra $A=\operatorname{End} (\bigoplus_{i=1}^n E_i)$.}

\par\vspace{1ex}
\noindent 
Bondal's result  has been generalized to algebraic triangulated categories by Keller (\cite[Theorem 8.7]{keller}). In particular, the statement below is a consequence of Keller's work.
\par\vspace{1ex}\noindent 
\textbf{Theorem -- Keller-Orlov.} \cite[Corollary 1.9]{orlov} \textit{Let $\T$ be an algebraic triangulated category. Assume that $\T$ has a full strong exceptional sequence $\langle E_1 , \dots, E_n \rangle$. Then the category $\T$ is equivalent to the derived category $\der(\operatorname{mod-}A)$, where $A=\operatorname{End} (\bigoplus_{i=1}^n E_i )$ is the algebra of endomorphisms of the collection $\langle E_1 , \dots, E_n \rangle$.}
\par\vspace{1ex}\noindent 
A question may arise: is it possible to drop the algebraic requirement? At the moment, no answer has been found. As a matter of fact, it is incredibly hard to study the general case of triangulated categories; indeed, the definition of an exceptional object requires the category to be $\mathbb{K}$-linear, with $\mathbb{K}$ a field, and the only known example of non-algebraic $\mathbb{K}$-linear triangulated category is studied in \cite{rizzvdb}.

Our aim is to generalize Bondal's result. For this reason, we deal with the construction of a global t-structure, starting with compatible t-structures on semiorthogonal components. Surprisingly, the result is not hard to prove and it follows from basic theory. As a corollary, a full strong exceptional sequence of length 2 gives a heart of dimension at most 1, so that Hubery's result \cite[Theorem 3.2]{hubery} can be applied without any additional requirement on the triangulated category. We obtain the following.

\par\vspace{1ex}\noindent 
\textbf{\ref{cor:uniex}. Corollary.} Let $\mathbb{K}$ be a field. Any $\mathbb{K}$-linear triangulated category $\T$ with a full strong exceptional sequence $\langle E_1 , E_2 \rangle $ such that $\dim_{\mathbb{K}}\hom (E_1 , E_2) < \infty$ is algebraic. In particular, $\T \cong \der(\operatorname{mod-}A)$, where $A=\operatorname{End}(\bigoplus_{i=1}^2 E_i)$.
\par\vspace{1ex}\noindent 

For a strong exceptional sequence with length greater than 2 we deal with realized triangulated categories, i.e. triangulated categories $\T$ admitting an exact functor $\mathsf{real}:\der(\mathcal{A}) \to \mathcal{S}$ for every heart $\mathcal{A}$ of a bounded t-structure on a full subcategory $\mathcal{S}$ of $\T$. In particular, it has been proven that all algebraic triangulated categories are realized (see Example \ref{es:real} for a discussion on examples of realized triangulated categories). By induction on the length of the exceptional sequence, we can prove the main result.

\par\vspace{1ex}\noindent 
\textbf{\ref{thm:genbon}. Theorem.} 
Let $\mathbb{K}$ be a field and let $\T$ be a realized $\mathbb{K}$-linear triangulated category with a full strong exceptional sequence $\langle E_1 , \dots , E_n \rangle$ such that $\bigoplus_i \hom(X,Y[i])$ is a finite-dimensional vector space for any $X,Y \in \T$. Then $\T \cong \der(\operatorname{mod-}A)$, where $A= \operatorname{End}(\bigoplus_{i=1}^n E_i)$.
\par\vspace{1ex}
\noindent \hrulefill
\par \vspace{1ex}
\noindent In Section 1, we recall some basic results on t-structures. Section 2 is devoted to the notion of compatible t-structures with respect to a semiorthogonal decomposition. Section 3 covers the needed knowledge on quivers, while Section 4 deals with filtered triangulated categories, introduced by Beilinson in \cite[Appendix A]{beilinson}. In Section 5, we introduce the concept of realized triangulated category and state the main theorem.
Appendix A generalizes a result on Yoneda extensions of exact categories.

%\noindent In Section 1, we introduce a concept that generalizes the idea of heart of a bounded t-structure (see Definition \ref{def:1stext}). Such a notion is not necessary to understand the article, but, for instance, it allows a broader version of Theorem \ref{thm:casedim1} for fully faithful functors that are not essentially surjective (see Corollary \ref{cor:heartuni}). The study of Yoneda extensions in a general triangulated category with a bounded t-structure, addressed in Section 2, will be crucial to proving the algebraicity of any triangulated category whose heart has homological dimension 0 or 1. Section 3 is completely focused on the proof of Theorem \ref{thm:casedim1}, together with some consequences. The last section deals with semiorthogonal decompositions, discussing compatible t-structures and their use in the case of exceptional sequences.
%In particular, the bounded derived category of the projective line $\der(\mathbb{P}_{\mathbb{K}}^1)$ is uniquely determined by  its full strong exceptional sequence $\langle \mathcal{O}, \mathcal{O}(1) \rangle$, up to $\mathbb{K}$-linear exact equivalence.
\section{Some basic results on bounded t-structures}
In this section, we define t-structures and hearts, and state some classical results.

\begin{defn}
A \emph{t-structure} on a triangulated category $\T$ is a full subcategory $\T^{\le 0}$ closed by left shifts, i.e. $\T^{\le 0}[1] \subset \T^{\le 0}$, and such that for any object $E \in \T$ there is a distinguished triangle $A \to E \to B \to A[1]$, where $A \in \T^{\le 0}$ and $B \in \T^{\ge 1}:= (\T^{\le 0})^\bot$.

We remember that for any full subcategory $\mathcal{C} \subset \T$, we write $\mathcal{C}^\bot$ to mean the full subcategory whose objects are $Y$ such that $\hom(X,Y)=0$ for any $X \in \mathcal{C}$.

We will write $\T^{\le i}:=\T^{\le 0}[-i]$ and $\T^{\ge j} := \T^{\ge 1}[-j+1]$ for any $i,j$ integers. A t-structure is said to be \emph{bounded} if 
\[
\T = \bigcup_{i,j \in \mathbb{Z}} (\T^{\le i} \cap \T^{\ge j}).
\]
Moreover, the t-structure is \emph{non-degenerate} if $\bigcap_i \T^{\le i}=\bigcap_j \T^{\ge j}=0$.

The \emph{heart (of the bounded t-structure $\T^{\le 0}$)} is the additive category $\mathcal{A}:= \T^{\ge 0} \cap \T^{\le 0}$, and it is proven to be abelian. We define the \emph{homological dimension of $\mathcal{A}$ in $\mathcal{T}$}, denoted by $\dim_{\T}\mathcal{A}$, as the greatest integer $n$ such that $\hom(A,B[n])\ne 0$ for some $A,B \in \mathcal{A}$.
\end{defn}
\begin{nota}
Given a map $f:A\to B$ in a triangulated category, its cone will be denoted by $\cone(f)$. When there is no need to make $f$ explicit, we will write $\cone (A\to B)$.
\end{nota}

\begin{defnp} {\cite[Lemma 3.2]{bri}.}\label{def:heart}
Let $\T$ be a triangulated category. A heart (of bounded t-structure) on $\T$ is an additive category $\mathcal{A}$ satisfying the following properties: 
\begin{enumerate}
\item For any two objects $A,B \in \mathcal{A}$, $\hom(A,B[n])=0$ for every $n<0$.
\item Given an object $E \in \mathcal{T}$, we can find integers $k_1 > \dots >k_m$ and a filtration
\[
0=E_0 \to E_1 \to \dots \to E_{m-1}\to E_m=E
\]
such that $\cone (E_{i-1} \to E_{i})=A_i[k_i]$ for some $A_i \in \mathcal{A}$.
The \emph{cohomology objects} (with respect to $\mathcal{A}$) are defined as $H^{-k_i}(E):=A_i$. 
\end{enumerate} 

%We say that $\mathcal{A}$ is a \emph{non-degenerate heart} if it is a heart of a non-degenerate bounded t-structure. 
\end{defnp}
\begin{lem}\label{lem:bnondeg}
Every bounded t-structure $\mathcal{T}^{\le 0}$ is  non-degenerate.
In particular, the collection of functors $H^i$ is conservative and $H^i(E)=0$ for all $i>0$ (respectively $i<0$) if and only if $E \in \T^{\le 0}$ (respectively $\T^{\ge 0}$); this is \cite[Proposition 1.3.7]{bbd}.
\end{lem}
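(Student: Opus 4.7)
The plan is to split the three assertions (non-degeneracy, the characterisation of $\T^{\le 0}$ and $\T^{\ge 0}$ by vanishing of cohomology, and conservativity of the family $\{H^i\}$) and to deduce each from the filtration of Definition/Proposition \ref{def:heart}, combined with the orthogonality $\hom(\T^{\le 0},\T^{\ge 1})=0$ built into the definition of a t-structure.

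Non-degeneracy is the easiest piece. If $E\in\bigcap_i\T^{\le i}$, then boundedness yields some $j$ with $E\in\T^{\ge j}$; since also $E\in\T^{\le j-1}$, the shifted orthogonality $\hom(\T^{\le j-1},\T^{\ge j})=0$ forces $\hom(E,E)=0$, hence $E=0$. The other intersection is handled symmetrically.

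For the cohomological characterisation, I would invoke Definition/Proposition \ref{def:heart} to produce a filtration $0=E_0\to\dots\to E_m=E$ with graded pieces $A_i[k_i]$, $k_1>\dots>k_m$, and $H^{-k_i}(E)=A_i$. Since $\mathcal{A}\subset\T^{\le 0}$, each $A_i[k_i]$ lies in $\T^{\le -k_i}$, and $\T^{\le 0}$ is closed under extensions. So if $H^j(E)=0$ for every $j>0$ — equivalently, every nonzero $A_i$ satisfies $k_i\ge 0$ — then each $A_i[k_i]\in\T^{\le 0}$ and a short induction on $m$ places $E$ in $\T^{\le 0}$. The converse, that $E\in\T^{\le 0}$ forces $H^j(E)=0$ for $j>0$, is the main obstacle: it is essentially an assertion of uniqueness of the filtration (equivalently, well-definedness of $H^i$ up to isomorphism), which is not isolated in the excerpt. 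The cleanest remedy is to construct the standard truncation functors $\tau^{\le n},\tau^{\ge n}$ from the t-structure axioms and identify them with the filtration data, so that $E\in\T^{\le 0}$ directly gives $\tau^{\ge 1}E=0$ and thus all positive $H^j(E)$ vanish. The $\T^{\ge 0}$ case is dual.

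Conservativity of $\{H^i\}$ then reduces to the single assertion that $E=0$ whenever $H^i(E)=0$ for every $i$: in that case every $A_i$ vanishes, so the filtration collapses and $E=0$. Applied to $\cone(f)$ together with the long exact cohomology sequence, this yields that $f$ is invertible whenever every $H^i(f)$ is.
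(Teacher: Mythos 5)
Your proof of non-degeneracy matches the paper's argument essentially word for word: pick $E\in\bigcap_i\T^{\le i}$, use boundedness to place $E\in\T^{\ge h}$ and $E\in\T^{\le h-1}$ simultaneously, and conclude $\hom(E,E)=0$ from the orthogonality $\T^{\ge h}=(\T^{\le h-1})^\bot$; the other intersection is symmetric. So on the part the paper actually proves, the two arguments coincide.

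The remaining assertions (conservativity of $\{H^i\}$ and the characterisation of $\T^{\le 0}$, $\T^{\ge 0}$ by vanishing of cohomology) are not proved in the paper at all: they are attributed directly to \cite[Proposition 1.3.7]{bbd}, which states precisely that a non-degenerate t-structure has these properties. You attempt to rederive them, and in doing so correctly identify the genuine obstruction — the converse implication ``$E\in\T^{\le 0}\Rightarrow H^j(E)=0$ for $j>0$'' (and conservativity) both require that the filtration of Definition/Proposition \ref{def:heart} be canonical, i.e. that the $H^i$ be well defined, which is exactly what the BBD truncation functors $\tau^{\le n},\tau^{\ge n}$ provide. Your proposed remedy is the standard one, but since you leave it as a sketch there is a real gap in your write-up if one insists on a self-contained proof; in the context of the paper, however, that gap is moot because the paper deliberately delegates these claims to BBD once non-degeneracy is established.
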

\begin{proof}
Let $E$ be in the intersection of all $\mathcal{T}^{\le i}$. Since $\T^{\le 0}$ is bounded, $E$ must be in $\T^{\le j} \cap \T^{\ge h}$ for some $j,h$. Then $E$ is in $\T^{\ge h}$, but also in $\T^{\le h-1}$. By definition, 

\[
\T^{\ge h} = \T^{\ge 1}[-h+1] = (\T^{\le 0})^{\bot} [-h+1]= (\T^{\le 0}[-h+1])^{\bot} = (\T^{\le h-1})^{\bot}.
\]
So $\hom(E,E)=0$, therefore $E$ is a zero object. In the same way one proves that also $\bigcap_i \mathcal{T}^{\ge i} =0$.
\end{proof}

\begin{lem} \label{lem:sesdisth}
Let $\mathcal{A} \subset \T$ be a heart. Then $0\to A \overset{\alpha}{\to} B \overset{\beta}{\to} C \to 0$ is a short exact sequence in $\mathcal{A}$ if and only if there exists a map $\gamma:C \to A[1]$ such that 
\[
\begin{tikzcd}
A \ar[r,"\alpha"]& B \ar[r,"\beta"]& C \ar[r,"\gamma"]& A[1]
\end{tikzcd}
\] is a distinguished triangle (cf. proof of \cite[Theorem 1.3.6]{bbd}).
\end{lem}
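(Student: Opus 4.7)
The plan is to deduce both directions from the existence of a cohomological long exact sequence attached to any distinguished triangle---a standard consequence of the t-structure axioms, implicit in Lemma~\ref{lem:bnondeg}---combined with the fact that the cohomology functor $H^0$ restricts to the identity on $\mathcal{A}$ while $H^i$ vanishes on $\mathcal{A}$ for every $i\neq 0$.

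For the backward implication, I would apply the cohomology functors to the distinguished triangle $A\to B\to C\to A[1]$: since all three objects belong to $\mathcal{A}$, every $H^i$ of $A$, $B$, $C$ vanishes for $i\neq 0$, so the long exact sequence collapses to the short exact sequence $0\to A\xrightarrow{\alpha} B\xrightarrow{\beta} C\to 0$ in $\mathcal{A}$.

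For the forward implication, I would complete the monomorphism $\alpha$ to a distinguished triangle $A\xrightarrow{\alpha} B\xrightarrow{\mu} X\xrightarrow{\delta} A[1]$ in $\T$ and read off $H^\bullet(X)$ from the long exact sequence. The vanishing of $H^i(A)$ and $H^i(B)$ for $i\neq 0$ forces $H^i(X)=0$ for $i\notin\{-1,0\}$; injectivity of $\alpha$ in $\mathcal{A}$ yields $H^{-1}(X)=0$; and the segment $A\xrightarrow{\alpha} B\to H^0(X)\to 0$ identifies $H^0(X)$ with $\coker(\alpha)\cong C$. By Lemma~\ref{lem:bnondeg} this places $X$ in $\T^{\le 0}\cap\T^{\ge 0}=\mathcal{A}$, so the backward direction applied to the new triangle gives a short exact sequence $0\to A\to B\xrightarrow{\mu} X\to 0$. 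The universal property of the cokernel then provides a unique isomorphism $\varphi:X\xrightarrow{\sim} C$ with $\varphi\mu=\beta$, and setting $\gamma:=\delta\circ\varphi^{-1}$ transports the triangle into the required form.

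The main obstacle---as the pointer to the proof of \cite[Theorem 1.3.6]{bbd} already suggests---is in the forward direction: one needs to know that the truncation functors assemble into a cohomological $H^i$, which is standard but not spelled out in the excerpt, and one must then carry the resulting triangle along $\varphi$ compatibly with the given map $\beta$. Once that long exact sequence is at hand, everything else reduces to routine book-keeping.
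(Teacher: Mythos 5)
Your proposal is correct and follows essentially the same approach as the paper: cone off $\alpha$, use the cohomological long exact sequence to identify the cone with $C$, and run the long exact sequence in reverse for the converse. You are in fact slightly more careful than the paper in the forward direction, explicitly invoking the universal property of the cokernel to build the isomorphism $\varphi\colon X\xrightarrow{\sim}C$ compatibly with $\beta$ and then transporting the triangle along $\varphi$, whereas the paper simply asserts one may "choose $C'$ to be $C$ with the same map"; your version closes that small gap cleanly.
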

\begin{proof}
Notice that $H^i(A)=0$ for $i\ne 0$ and $H^0(A) =A $ for any $A \in \mathcal{A}$. We recall that, given a distinguished triangle $E \to F \to G \to E[1]$, we have an induced exact sequence
\[
\dots \to H^i(E) \to H^i (F) \to H^i(G) \to H^{i+1}(E) \to \dots
\]
in $\mathcal{A}$.

Let $0\to A \overset{\alpha}{\to} B \overset{\beta}{\to} C \to 0$ be a short exact sequence in $\mathcal{A}$ and set $C':=\cone (A\to B)$. The cohomology functors give rise to an exact sequence
\[
0 \to H^{-1}(C') \to A \overset{\alpha}{\to} B \to H^0(C') \to 0
\]
in $\mathcal{A}$. Since $\alpha$ is a monomorphism in $\mathcal{A}$, $H^{-1}(C')$ must be zero; on the other side, $H^0(C')\cong \coker (A\to B)= C$. The filtration in the definition proves that $C' \cong H^0(C')\cong C$, so we can choose $C'$ to be $C$ with the same map appearing in the distinguished triangle.

Conversely, let $A\overset{\alpha}{\to} B \overset{\beta}{\to} C \overset{\gamma}{\to} A[1]$ be a distinguished triangle with $A,B,C \in \mathcal{A}$. Then the cohomology functors show that there is an exact sequence
\[
0=H^{-1}(C) \to H^0(A) \overset{\alpha}{\to} H^0 (B) \overset{\beta}{\to} H^0(C) \to H^1(A)=0,
\]
concluding that $0\to A \overset{\alpha}{\to} B \overset{\beta}{\to} C \to 0$ is a short exact sequence.
\end{proof}
%
%\begin{lem}\label{lem:ext}
%Let $\mathcal{A}$ be a heart in a triangulated category $\T$.
%Let $\ext(A,B)$ be the set of extensions $0 \to B \to C \to A\to 0 $ up to isomorphisms of the form
%\[
%\begin{tikzcd}
%&C\ar[dd,"\cong"]\ar[dr]&\\
%B\ar[ru]\ar[rd]&&A\\
%&D\ar[ur]
%\end{tikzcd}
%\]
%Then there is a natural isomorphism $\ext(A,B)\cong \hom_{\T}(A,B[1])$. In particular, $\dim_{\T}\mathcal{A}\ge 1$ for some $\T$ implies that $\dim_{\T'}\mathcal{A}\ge 1$ for any $\T'$ having $\mathcal{A}$ as a heart.
%\end{lem}
%\begin{proof}
%We will apply Lemma \ref{lem:sesdisth} tacitly. Pick $D$ isomorphic to $C$ in the sense of the statement. By the isomorphism of distinguished triangles
%\[
%\begin{tikzcd}
%B\ar[r]\ar[d,"\id"]&C\ar[r]\ar[d,"\cong"]& A \ar[r,"h"]\ar[d,"\id"] & B[1]\ar[d,"\id"]\\
%B \ar[r]&D\ar[r]&A\ar[r,"k"]&B[1]
%\end{tikzcd}
%\]
%we conclude that $h=k$, so the map is uniquely determined. On the other side, consider a map $\ell[1]:A\to B[1]$. 
%Between two distinguished triangles associated to $\ell$, we are able to create the following picture
%\[
%\begin{tikzcd}
%A[-1]\ar[d,"\id"]\ar[r,"\ell"]& B\ar[r]\ar[d,"\id"]&C\ar[r]\ar[d,"\cong"]& A \ar[d,"\id"] \\
%A[-1]\ar[r,"\ell"]& B \ar[r]&D\ar[r]&A
%\end{tikzcd}
%\]
%so for any map $\ell$ we have an isomorphism class of extensions. The fact that $C,D$ are in $\mathcal{A}$ (up to isomorphism) follows by the cohomology functor. The naturality claim is proven with similar arguments.
%\end{proof}

\section{Semiorthogonal decompositions and t-structures}

After recalling the notion of semiorthogonal decomposition, we define compatibility between t-structures with respect to such decomposition. In Theorem \ref{thm:decheart} we show how this situation gives rise to a global t-structure. As an application of the result, we study exceptional sequences and state Corollary \ref{cor:uniex}, which generalizes Bondal's theorem \cite[Theorem 6.2]{bondal} for exceptional sequences of length 2.

\begin{defn}
Let $\T$ be a triangulated category. A \emph{semiorthogonal decomposition} is a sequence of full triangulated subcategories $\mathcal{T}_1 , \mathcal{T}_2 , \dots, \mathcal{T}_n$ such that
\begin{enumerate}
\item $\hom(\mathcal{T}_i,\mathcal{T}_j)=0$ with $i>j$;
\item For any $E \in \T$, there is a filtration
\[
0 = E_{n} \to E_{n-1} \to \dots \to E_1 \to E_0= E
\]
such that $\cone(E_{i} \to E_{i-1}) \in \mathcal{T}_i$ for any $i \in \lbrace 1, \dots, n \rbrace$.
\end{enumerate}
In this situation, we will write $\mathcal{T}=\langle \mathcal{T}_1 , \mathcal{T}_2, \dots, \mathcal{T}_n\rangle$.
\end{defn}
\begin{rem}\label{rem:uniquecpt}
Item 1 entails that both the filtration and its cones are unique up to isomorphism and functorial, as observed in \cite[Remark 2.2]{mst}.
\end{rem}
\begin{defn}
Let $\mathcal{T}$ be a triangulated category. Given two full subcategories $\mathcal{X}$ and $\mathcal{Y}$ of $\mathcal{T}$, we define $\mathcal{X}* \mathcal{Y}$ to be the full subcategory of $\mathcal{T}$ whose objects are
\[
\lbrace Z \in \mathcal{T} \mid\text{there exists a distinguished triangle }X \to Z \to Y \to X[1]\text{, with }X \in \mathcal{X},Y \in \mathcal{Y}\rbrace.
\]
This construction gives rise to an operation $*$ between full subcategories of $\mathcal{T}$.
\end{defn}
\begin{prop}\emph{\cite[Lemma 1.3.10]{bbd}.}
The operation $*$ is associative.
\end{prop}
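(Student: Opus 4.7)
The plan is to establish the set-theoretic equality $(\mathcal{X}*\mathcal{Y})*\mathcal{Z} = \mathcal{X}*(\mathcal{Y}*\mathcal{Z})$ as two inclusions, each one reduced to a single application of the octahedral axiom. The two inclusions are formally symmetric (the definition of $*$ treats both slots on equal footing once one is willing to rotate the relevant triangles), so the real work is done once.

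For the inclusion $(\mathcal{X}*\mathcal{Y})*\mathcal{Z} \subseteq \mathcal{X}*(\mathcal{Y}*\mathcal{Z})$, I would take $W$ in the left-hand side, unpack the definition to extract a distinguished triangle $V \to W \to Z \to V[1]$ with $V \in \mathcal{X}*\mathcal{Y}$ and $Z \in \mathcal{Z}$, and then unpack once more to obtain a distinguished triangle $X \to V \to Y \to X[1]$ with $X \in \mathcal{X}$, $Y \in \mathcal{Y}$. At this point I have a composable pair $X \to V \to W$ whose two cones are exactly $Y$ and $Z$. Applying the octahedral axiom produces a new object $U$, which is the cone of the composition $X \to W$, fitting into a distinguished triangle
\[
Y \to U \to Z \to Y[1].
\]
This triangle witnesses $U \in \mathcal{Y}*\mathcal{Z}$, while the triangle $X \to W \to U \to X[1]$ produced by the same axiom witnesses $W \in \mathcal{X}*(\mathcal{Y}*\mathcal{Z})$, closing the inclusion.

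For the reverse inclusion I would run the same argument in the other direction: starting from triangles $X \to W \to U \to X[1]$ and $Y \to U \to Z \to Y[1]$ that place $W$ in $\mathcal{X}*(\mathcal{Y}*\mathcal{Z})$, I would look at the composable pair $W \to U \to Z$ and invoke the octahedral axiom to obtain an object $V$ with a triangle $V \to W \to Z \to V[1]$ and a triangle $X \to V \to Y \to X[1]$, the latter showing $V \in \mathcal{X}*\mathcal{Y}$ and the former placing $W$ in $(\mathcal{X}*\mathcal{Y})*\mathcal{Z}$. I do not foresee a real obstacle here: the whole content of the proposition is the octahedral axiom, and the only care required is the bookkeeping of which three edges of the octahedron give the triangles needed to match the definition of $*$. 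This is why the classical BBD argument is quite short.
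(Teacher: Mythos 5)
Your proof is correct and is precisely the standard argument from \cite[Lemma 1.3.10]{bbd}, which the paper cites without reproducing. Both inclusions come down to one application of the octahedral axiom to the appropriate composable pair ($X\to V\to W$ for one direction, $W\to U\to Z$ for the other), and the bookkeeping of which triangles of the octahedron realize the membership in $\mathcal{Y}*\mathcal{Z}$, $\mathcal{X}*(\mathcal{Y}*\mathcal{Z})$, $\mathcal{X}*\mathcal{Y}$ and $(\mathcal{X}*\mathcal{Y})*\mathcal{Z}$ is exactly as you describe.
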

\begin{es}
Let $\mathcal{T}$ be a triangulated category.
Given a semiorthogonal decomposition $\mathcal{T}= \langle \mathcal{T}_1 , \dots , \mathcal{T}_n \rangle$, we can write
$\mathcal{T}= \mathcal{T}_n*\dots *\mathcal{T}_2*\mathcal{T}_1$. 
If we consider a t-structure $\mathcal{T}^{\le 0}$ on $\mathcal{T}$, we have $\mathcal{T}=  \mathcal{T}^{\le 0}*\mathcal{T}^{\ge 1} $.
\end{es}
%\begin{prop}\label{prop:starbot}
%Given two full subcategories $\mathcal{X},\mathcal{Y}$ of $\T$ with a zero object, we have
%\[
%(\mathcal{X}* \mathcal{Y})^{\bot} = \mathcal{X}^{\bot} \cap \mathcal{Y}^{\bot} \subseteq \mathcal{X}^{\bot} * \mathcal{Y}^{\bot}.
%\]
%\end{prop}
%\begin{proof}
%Let $A \in (\mathcal{X}* \mathcal{Y})^{\bot}$, so $\hom(B,A)=0$ for any $B \in \mathcal{X}* \mathcal{Y}$. In particular, we can choose $B \in \mathcal{X}$ and $B \in \mathcal{Y}$. Therefore $A \in \mathcal{X}^{\bot}$ and $A \in \mathcal{Y}^{\bot}$.
%
%Let $A \in \mathcal{X}^{\bot} \cap \mathcal{Y}^{\bot}$. In this case, $\hom(B,A)=0$ for any $B \in \mathcal{X}$ or $B\in \mathcal{Y}$. Given $C \in \mathcal{X}* \mathcal{Y}$, there exists a distinguished triangle $C_{\mathcal{X}} \to C \to C_{\mathcal{Y}} \to C_{\mathcal{X}}[1]$ where $C_{\mathcal{X}} \in \mathcal{X}$ and $C_{\mathcal{Y}}\in \mathcal{Y}$. The hom-exact sequence
%\[
%\begin{tikzcd}
%0=\hom(C_{\mathcal{Y}},A) \ar[r]& \hom(C,A) \ar[r]& \hom(C_{\mathcal{X}},A)=0
%\end{tikzcd}
%\]
%proves that $\hom(C,A)=0$ as well, so $A \in (\mathcal{X}* \mathcal{Y})^{\bot}$.
%
%Notice that the inclusion $\mathcal{X}^{\bot} \cap \mathcal{Y}^{\bot} \subseteq \mathcal{X}^{\bot} * \mathcal{Y}^{\bot}$ is obvious, since $\mathcal{X}^{\bot}, \mathcal{Y}^{\bot} \subseteq \mathcal{X}^{\bot} * \mathcal{Y}^{\bot}$.
%\end{proof}

\begin{defn}
Let $\T=\langle \T_1 , \T_2 \rangle$ be a semiorthogonal decomposition, $\T$ any triangulated category. Assume that $\T_i$ has a t-structure $\T_i^{\le 0}$ for $i=1,2$. Then $\T_1^{\le 0}$ and $\T_2^{\le 0}$ are \emph{compatible in $\T$} if $\hom(\T_1^{\le 0} , \T_2^{\ge 1}) = 0$. 

Denoted by $\mathcal{A}_1$ and $\mathcal{A}_2$ the hearts of $\mathcal{T}_1^{\le 0} $ and $\T^{\le 0}_2$ respectively, the \emph{relative dimension of $\mathcal{A}_1$ and $\mathcal{A}_2$ in $\T$} is the number
\[
\rdim_{\T}(\mathcal{A}_1 , \mathcal{A}_2) := \begin{cases}
\sup \left \lbrace m \in \mathbb{Z} \mid \hom(\mathcal{A}_1 , \mathcal{A}_2 [m])\ne 0 \right \rbrace &\text{if the set is nonempty}\\
-1 & \text{otherwise.}
\end{cases}
\]
Notice that, whenever the set above is nonempty, $\rdim_{\T}(\mathcal{A}_1, \mathcal{A}_2) \ge 0$ by compatibility. The reason why we have chosen the value $-1$ in case the set is empty will become clear reading the statement of Theorem \ref{thm:decheart}.
\end{defn}
%\begin{prop} \emph{ \cite[Proposition I.1.11]{bbd}}.\label{prop:bbd}
%Let $\T$ be a triangulated category. Any commutative square
%\[
%\begin{tikzcd}
%A\ar[r]\ar[d]&B\ar[d]\\
%A' \ar[r]&B'
%\end{tikzcd}
%\]
%gives rise to a diagram
%\[
%\begin{tikzcd}
%A\ar[r]\ar[d]&B\ar[r]\ar[d]&C\ar[r]\ar[d]&A[1]\ar[d]\\
%A' \ar[r]\ar[d]&B'\ar[r]\ar[d]&C'\ar[r]\ar[d]&A'[1]\ar[d]\\
%A''\ar[r]\ar[d]&B''\ar[r]\ar[d]&C''\ar[r]\ar[d]&A''[1]\ar[d]\\
%A[1]\ar[r]& B[1]\ar[r]&C[1]\ar[r]&A[2]
%\end{tikzcd}
%\]
%where all the squares are commutative, except the lower right-hand one, which is anticommutative. Further, each of the rows and each of the columns are distinguished triangles; in particular, $\cone (C \to C')=\cone (A'' \to B'')$.
%\end{prop}

\begin{thm}\label{thm:decheart} Let $\mathcal{T}$ be a triangulated category with a semiorthogonal decomposition $\T= \langle \T_1 , \T_2 \rangle$. Given two compatible t-structures $\T_1^{\le 0}$ and $\T_2^{\le 0}$ on $\T_1$ and $\T_2$ respectively, the full subcategory defined by  
\[
\T^{\le 0} := \mathcal{T}_2^{\le 0} * (\mathcal{T}_1^{\le 0}[1])
\]
is a t-structure.
%, which may be denoted by $\T^{\le 0} := \ext (\T_2^{\le 0}, \T_1^{\le 0} [1])$. 
Furthermore,
\begin{enumerate}
\item If $\T_1^{\le 0}$ and $\T_2^{\le 0}$ are bounded (respectively non-degenerate), then $\T^{\le 0}$ is bounded (respectively non-degenerate).
\item Let $\mathcal{A}_1$, $\mathcal{A}_2$ and $\mathcal{A}$ be the hearts associated to $\T_1^{\le 0}$, $\T_2^{\le 0}$ and $\T^{\le 0}$ respectively. Then 
\[
\mathcal{A}=\mathcal{A}_2 * (\mathcal{A}_1[1]).
\]
\item %Let us call the \emph{relative dimension of $\mathcal{A}_1$ and $\mathcal{A}_2$ in $\T$}, denoted by $\rdim_{\T}(\mathcal{A}_1 , \mathcal{A}_2)$, as the greatest integer $m\ge -1$ such that $\hom(\mathcal{A}_1 , \mathcal{A}_2 [m])\ne 0$.
The following equality holds true whenever at least one of the two hearts $\mathcal{A}_1, \mathcal{A}_2$ is nonzero: \[
\dim_{\T} \mathcal{A} = \max \lbrace  \dim_{\T_1} \mathcal{A}_1 , \dim_{\T_2} \mathcal{A}_2 , \rdim_{\T} (\mathcal{A}_1 , \mathcal{A}_2)+1\rbrace.\]
\end{enumerate}
\end{thm}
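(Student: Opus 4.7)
The closure $\T^{\le 0}[1] \subset \T^{\le 0}$ is immediate from $\T_i^{\le 0}[1] \subset \T_i^{\le 0}$, which gives $(\T_2^{\le 0}*\T_1^{\le 0}[1])[1] = \T_2^{\le 0}[1]*\T_1^{\le 0}[2] \subset \T_2^{\le 0}*\T_1^{\le 0}[1]$. The work is therefore to construct, for every $E \in \T$, a distinguished triangle $A \to E \to B \to A[1]$ with $A \in \T^{\le 0}$ and $B$ in the right orthogonal. I expect this orthogonal to coincide with $\T_2^{\ge 1}*\T_1^{\ge 0}$, and the inclusion $\supset$ would be a direct computation: given $W \in \T^{\le 0}$ decomposed as $Y_2 \to W \to Y_1$ and $Z \in \T_2^{\ge 1}*\T_1^{\ge 0}$ decomposed as $X_2 \to Z \to X_1$, the four cross-terms $\hom(Y_j, X_k)$ vanish by (respectively) the semiorthogonal decomposition, the t-structure on $\T_2$, the t-structure on $\T_1$, and compatibility shifted once to $\hom(\T_1^{\le -1}, \T_2^{\ge 0}) = 0$.

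To build the triangle I start from the semiorthogonal filtration $E_2 \to E \to E_1 \to E_2[1]$ and the t-structure decompositions of each piece: $A_2 \to E_2 \to B_2$ with $A_2 \in \T_2^{\le 0}$, $B_2 \in \T_2^{\ge 1}$, and $A_1 \to E_1 \to B_1$ with $A_1 \in \T_1^{\le -1}$, $B_1 \in \T_1^{\ge 0}$ (using the shifted t-structure on $\T_1$). Compatibility forces the composite $A_1 \to E_1 \to E_2[1] \to B_2[1]$ to vanish, so the map $A_1 \to E_2[1]$ lifts through $A_2[1]$; this lift both defines a distinguished triangle $A_2 \to A \to A_1 \to A_2[1]$ (placing $A \in \T^{\le 0}$) and extends to a morphism of triangles into $E_2 \to E \to E_1$. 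Setting $B := \cone(A \to E)$, the $3\times 3$ lemma then produces a triangle $B_2 \to B \to B_1 \to B_2[1]$, so $B \in \T_2^{\ge 1}*\T_1^{\ge 0}$ as required.

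Shifts give $\T^{\le i} = \T_2^{\le i}*\T_1^{\le i-1}$ and $\T^{\ge i} = \T_2^{\ge i}*\T_1^{\ge i-1}$; combined with the uniqueness of semiorthogonal components (Remark \ref{rem:uniquecpt}), both boundedness and non-degeneracy descend from $\T_1, \T_2$ to $\T$. The heart identity $\mathcal{A} = \mathcal{A}_2*\mathcal{A}_1[1]$ is obtained by intersecting aisles, since $\T_1^{\le -1} \cap \T_1^{\ge -1} = \mathcal{A}_1[1]$. For the dimension formula, decomposing $A, B \in \mathcal{A}$ as $A_2 \to A \to A_1'[1]$ and $B_2 \to B \to B_1'[1]$ and unwinding $\hom(A, B[n])$ through the two induced long exact sequences, I am left with four cross-terms: $\hom(A_2, B_2[n])$ (controlled by $\dim_{\T_2}\mathcal{A}_2$), $\hom(A_1', B_1'[n])$ (controlled by $\dim_{\T_1}\mathcal{A}_1$), $\hom(A_1', B_2[n-1])$ (controlled by $\rdim_{\T}(\mathcal{A}_1, \mathcal{A}_2) + 1$, which explains both the $+1$ shift and the convention $\rdim = -1$ in the empty case), and $\hom(A_2, B_1'[n+1])$ (vanishing by semiorthogonality). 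This yields the upper bound, while matching lower bounds are realised by taking $A, B$ both in $\mathcal{A}_2$, both in $\mathcal{A}_1[1]$, or one in each.

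The main obstacle is the gluing in the second paragraph: lifting the connecting map via compatibility and identifying the cone through the $3\times 3$ lemma. Once that is secured, all remaining assertions reduce to bookkeeping with long exact sequences and with the uniqueness of semiorthogonal filtrations.
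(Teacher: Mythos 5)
Your proof is correct and takes a genuinely different route for the core construction.

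\textbf{Comparison.} To show that $\T^{\le 0} = \T_2^{\le 0} * (\T_1^{\le 0}[1])$ is an aisle, the paper works entirely at the level of the $*$-operation: it writes $\T = \T_2^{\le 0} * \T_2^{\ge 1} * (\T_1^{\le 0}[1]) * (\T_1^{\ge 1}[1])$, then proves that $\T_2^{\ge 1}*(\T_1^{\le 0}[1]) = (\T_1^{\le 0}[1]) * \T_2^{\ge 1}$ (both sides being the additive hull of $\T_2^{\ge 1} \cup \T_1^{\le 0}[1]$, by semiorthogonality in one direction and compatibility in the other), and regroups by associativity of $*$ to get $\T = \T^{\le 0}*(\T_2^{\ge 1} * \T_1^{\ge 0})$. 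Your argument instead constructs the truncation triangle explicitly: decompose $E$ via the semiorthogonal filtration, truncate $E_1$ and $E_2$ via their (shifted) t-structures, use compatibility to show the composite $A_1 \to E_2[1] \to B_2[1]$ vanishes so that the connecting map lifts to $A_2[1]$, then glue via the octahedron/$3\times 3$ lemma. The paper's route is shorter and hides the octahedron in the proof of associativity of $*$ (which is BBD's Lemma 1.3.10); yours makes the triangle and the role of the compatibility vanishing fully explicit. Both approaches use semiorthogonality and compatibility in exactly the same places.

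Two small points worth tightening. First, after building $A$ and invoking TR3 to get some $A\to E$, the $3\times 3$ lemma does not apply to an arbitrary TR3 completion: you should construct $A\to E$ as part of the $3\times 3$ completion of the commutative square formed by $A_1\to A_2[1]$ and $E_1\to E_2[1]$, so that the cones of the three vertical maps genuinely form a triangle $B_2 \to B \to B_1 \to B_2[1]$. Second, you only argue the inclusion $\T_2^{\ge 1}*\T_1^{\ge 0}\subset (\T^{\le 0})^\perp$; the reverse inclusion (which is what identifies $\T^{\ge 1}$, used e.g.\ for the non-degeneracy claim) follows from the existence of the triangle and the fact that an object of $(\T^{\le 0})^\perp$ forces the $\T^{\le 0}$-truncation to vanish, as in the paper. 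For the heart identity, intersecting $\T_2^{\le 0}*\T_1^{\le -1}$ with $\T_2^{\ge 0}*\T_1^{\ge -1}$ does require the uniqueness/functoriality of semiorthogonal components (Remark \ref{rem:uniquecpt}), which you invoke only for boundedness/non-degeneracy; it should be cited here as well.
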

\begin{proof}
Since $\T_i^{\le 0}[1] \subset \T_i^{\le 0}$ for $i=1,2$, it is clear that also $\T^{\le 0}$ is closed by left shifts. 
We aim to show that $\mathcal{T}= \mathcal{T}^{\le 0}* \mathcal{T}^{\ge 1}$, where $\mathcal{T}^{\ge 1}:= (\mathcal{T}^{\le 0})^{\bot}$. Notice that 
\[
\mathcal{T}= \mathcal{T}_2 * \mathcal{T}_1 = \mathcal{T}_2^{\le 0} *\mathcal{T}_2^{\ge 1}* (\mathcal{T}_1^{\le 0} [1]) * (\mathcal{T}_1^{\ge 1} [1]).\]
Since $\langle \mathcal{T}_1 , \mathcal{T}_2 \rangle$ is a semiorthogonal decomposition and compatibility holds, we have \[
\mathcal{T}_2^{\ge 1}*(\mathcal{T}_1^{\le 0} [1])=\lbrace X \oplus Y \mid X \in \mathcal{T}_2^{\ge 1} , Y \in \mathcal{T}_1^{\le 0} [1]\rbrace=(\mathcal{T}_1^{\le 0} [1])* \mathcal{T}_2^{\ge 1}.\] 
Therefore, $\mathcal{T}=\mathcal{T}_2^{\le 0} *(\mathcal{T}_1^{\le 0} [1])* \mathcal{T}_2^{\ge 1} * (\mathcal{T}_1^{\ge 1} [1])$. We claim that $\mathcal{T}_2^{\ge 1} * (\mathcal{T}_1^{\ge 1} [1])= \mathcal{T}^{\ge 1}$. 

Let $A \in \mathcal{T}_2^{\ge 1} * (\mathcal{T}_1^{\ge 1} [1])$. There exists a distinguished triangle
$A_2^{\ge 1}\to A \to A_1^{\ge 1}[1] \to A_2^{\ge 1}[1]$
with $A_2^{\ge 1} \in \T_2^{\ge 1}$ and $A_1^{\ge 1}[1] \in \T_1^{\ge 1}[1]$. Now let $B\in \T^{\le 0}$ and consider the distinguished triangle $B_2^{\le 0} \to B \to B_1^{\le 0}[1] \to B_2^{\le 0}[1]$, where $B_2^{\le 0} \in \T_2^{\le 0}$ and $B_1^{\le 0}[1] \in \T_1^{\le 0}[1]$. The two distinguished triangles introduced give rise to the following hom-exact sequences:
\[
\begin{tikzcd}[row sep=tiny,column sep=1.7em]
\dots \ar[r]& \hom(B,A_2^{\ge 1}) \ar[r]& \hom(B,A) \ar[r]& \hom (B,A_1^{\ge 1}[1]) \ar[r]& \dots\\
\dots \ar[r]& \hom(B_1^{\le 0}[1] , A_1^{\ge 1}[1]) \ar[r]& \hom(B,A_1^{\ge 1}[1]) \ar[r]& \hom(B_2^{\le 0} , A_1^{\ge 1}[1]) \ar[r]& \dots\\
\dots \ar[r]& \hom(B_1^{\le 0}[1] , A_2^{\ge 1}) \ar[r]& \hom(B,A_2^{\ge 1}) \ar[r]& \hom(B_2^{\le 0} , A_2^{\ge 1}) \ar[r]& \dots
\end{tikzcd}
\]
Since $\langle \T_1 ,\T_2\rangle$ is a semiorthogonal decomposition, $\hom(B_2^{\le 0} , A_1^{\ge 1}[1])=0$, and the properties of t-structures tell us that $\hom(B_1^{\le 0}[1] , A_1^{\ge 1}[1]) =0= \hom(B_2^{\le 0} , A_2^{\ge 1})$. By compatibility, we also have $\hom(B_1^{\le 0}[1] , A_2^{\ge 1})=0$.  Then the last two exact sequences prove that $\hom(B,A_1^{\ge 1}[1])=\hom(B,A_2^{\ge 1})=0$. The first exact sequence concludes that $\hom(B,A)=0$. Finally, $A \in \T^{\ge 1}$.

Conversely, if $A \in \T^{\ge 1}$, then there exists a distinguished triangle
\[
A^{\le 0} \to A \to A^{\ge 1} \to A^{\le 0}[1]
\] 
with $A^{\le 0} \in \T^{\le 0}$ and $A^{\ge 1} \in \mathcal{T}_2^{\ge 1} * (\mathcal{T}_1^{\ge 1} [1])$. Notice $A^{\le 0} \to A$ must be zero because $A \in \T^{\ge 1}$. Since $A^{\ge 1}$ cannot have a direct summand in $\T^{\le 0}$, we get that $A^{\le 0}=0$. In particular, $A=A^{\ge 1}$; as wanted, $\mathcal{T}_2^{\ge 1} * (\mathcal{T}_1^{\ge 1} [1])= \T^{\ge 1}$.
\begin{enumerate}
\item First, we deal with boundedness. Let $A \in \T$.
From the semiorthogonal decomposition, we get a distinguished triangle $A_2 \to A \to A_1[1]\to A_2[1]$ where $A_i \in \T_i$ for $i=1,2$.\footnote{The choice of $A_1[1]$ in the distinguished triangle is motivated by the fact that $\T^{\le 0} = \T_2^{\le 0} *(\T_1^{\le 0}[1])$.} Since $\T_i^{\le 0}$ is bounded, $A_i \in \T_i^{\le k_i} \cap \T_i^{\ge h_i}$ for some integers $k_i,h_i$. Let $k:= \max\lbrace k_1 , k_2\rbrace$ and $h:= \min \lbrace h_1, h_2\rbrace$.

By assumption, $A_i \in \T_i^{\le 0}[-k_i]$, so $A_i[k] \in \T_i^{\le 0}[k-k_i] \subseteq \T_i^{\le 0}$ being t-structures closed by left shifts. Therefore, $A[k] \in \T^{\le 0}$; in other words $A\in \T^{\le 0}[-k]=\T^{\le k}$.

Similarly, $A_i \in \T_i^{\ge 1}[1-h_i]$ implies $A_i[h-1] \in \T_i^{\ge 1}[1 -h_i+h-1] \subseteq \T_i^{\ge 1}$ (here we use the closure by right shifts). We conclude that $A[h-1] \in \T^{\ge 1}$, which means that $A\in \T^{\ge 1}[1-h] = \T^{\ge h}$. We have showed that $A \in \T^{\le k} \cap \T^{\ge h}$.
%
%
%
%
%
%
% With the same notation as above, we define
%\begin{align*}
%n&:= \min\lbrace k : B^{\ge 1}[k] \in \T^{\le 0}_1 , C^{\ge 1}[k]\in \T^{	\le 0}_2 \rbrace,\\ 
%m&:= \max\lbrace k : B^{\le 0}[k] \in \T^{\ge 1}_1 , C^{\le 0}[k] \in \T^{\ge 1}_2\rbrace
%\end{align*}
%These are good definitions since $\T_1^{\le 0}$ and $\T_2^{\le 0}$ are bounded.
%If $n\le 0$, then $A \in \T^{\le 0}$. Moreover, \eqref{eq:t1} shows that $A=A^{\le 0} \in \T^{\ge 1}[-m]$; we conclude that $A\in \T^{\le 0} \cap \T^{\ge m+1}$. Similarly, $m\ge 0$ implies that $A \in \T^{\ge 1} \cap \T^{\le n}$.
%In the case $n>0$ and $m<0$, we have that $A[n] \in \T^{\le 0}$ and $A[m] \in \T^{\ge 1}$; this entails that $A \in \T^{\ge m+1} \cap \T^{\le n}$. Finally, $\T^{\le 0}$ is bounded. 

To prove non-degeneracy when $\mathcal{T}_1^{\le 0}$ and $\mathcal{T}_2^{\le 0}$ are non-degenerate, let $C \in \bigcap_j \mathcal{T}^{\le j}$. By Remark \ref{rem:uniquecpt}, we have $C= \cone (E \to F)$ for $E \in \bigcap_j \mathcal{T}_1^{\le j}$ and $F \in \bigcap_j \mathcal{T}_2^{\le j}$. By hypothesis, both intersections are zero, so $C \cong 0$ as wanted. The proof of $\bigcap_j \mathcal{T}^{\ge j}=0$ is analogous since $\T^{\ge 1} =\mathcal{T}_2^{\ge 1} * (\mathcal{T}_1^{\ge 1} [1])$.
\item For any $A \in \mathcal{A}$ we can find two distinguished triangles, according to the fact that $A \in \T^{\le 0}$ and $A[-1] \in \T^{\ge 1}=\mathcal{T}_2^{\ge 1} * (\mathcal{T}_1^{\ge 1} [1])$. Then Remark \ref{rem:uniquecpt} proves that $\mathcal{A}$ is exactly as described in the statement.

\item Let $A= \cone (A_1 \to A_2)$ and $B= \cone (B_1 \to B_2)$ be two objects of $\mathcal{A}$, with $A_i,B_i \in \mathcal{A}_i$, $i=1,2$. For any $m$, we consider the long exact sequence
\[
\dots \to \hom (A_1[1] , B[m]) \to \hom (A, B[m]) \to \hom (A_2,B[m]) \to \dots
\]
associated to the distinguished triangle $A_1 \to A_2 \to A \to A_1[1]$. By considering the first and the last term, we can create two exact sequences associated to $B_1 \to B_2\to B \to B_1[1]$:
\begin{gather*}
\dots \to \hom(A_1[1] , B_2[m]) \to \hom (A_1[1],B[m]) \to \hom(A_1[1], B_1[m+1]) \to \dots \\
\dots \to \hom(A_2, B_2[m]) \to \hom(A_2,B[m]) \to \hom(A_2,B_1[m+1]) \to \dots
\end{gather*}
Notice that $\hom(A_2,B_1[m+1])=0$ since $A_2 \in \T_2$ and $B_1 \in \T_1$. Taking
\[
\ell:=\max \lbrace \dim_{\T_1} \mathcal{A}_1 , \dim_{\T_2} \mathcal{A}_2  , \rdim_{\T} (\mathcal{A}_1 , \mathcal{A}_2)+1\rbrace,\] 
the exact sequences above prove that $\hom(A,B[m])=0$ for any $m>\ell$, so $\dim_{\T}\mathcal{A} \le \ell$. To conclude, it suffices to show that $\dim_{\T}\mathcal{A}\ge \ell$. 

We have two cases. If $\ell$ is realized by the homological dimension of $\mathcal{A}_1$ or $\mathcal{A}_2$, we notice that $\mathcal{A}_1[1], \mathcal{A}_2   \subset \mathcal{A}$ by item 2, so $\dim_{\T}\mathcal{A}\ge \ell$.

Assume $\ell=\rdim_{\T}(\mathcal{A}_1 , \mathcal{A}_2)+1$. If $0 < \ell <+\infty$, for some choices of $A_1[1]$ and $B_2$ in $\mathcal{A}$ we have $\hom(A_1[1],B_2[\ell])\ne 0$. Similarly, if $\ell=+ \infty$, there is a sequence $(a_n) \subset \mathbb{Z}$ such that $a_n \to +\infty$ and $\hom(A_1^{n}[1],B_2^{n}[a_n])\ne 0$ for any $a_n$ and some $A_1^{n}[1], B_2^{n} \in \mathcal{A}$. Since
item 2 tells us that $A_1[1], A_1^n[1] ,B_2 , B_2^n \in \mathcal{A}$, in both cases $\dim_{\T}\mathcal{A}$ cannot be less than $\ell$. If $\ell=0$, then $\ell$ is also equal to the homological dimensions of $\mathcal{A}_1$ or $\mathcal{A}_2$, and this possibility has already been addressed.  \qedhere
\end{enumerate}
\end{proof}

\begin{rem}
As already used in the last part of the proof, the constructed t-structure may not behave as wanted. For instance, using the notation of the statement, $\mathcal{A}_1$ is not contained in $\mathcal{A}$: we need to consider its shift $\mathcal{A}_1[1]$. 

One may think this shifting could be easily adjusted, but the requirement needed is incredibly strong. The first idea it comes to mind is to consider the t-structure $\mathcal{T}_1^{\le 1}= \T_1^{\le 0}[-1]$ instead of $\mathcal{T}_1^{\le 0}$. Indeed, if we ask $\T_1^{\le 1}$ and $\T_2^{\le 0}$ to be compatible, no shift will be involved, and in particular $\mathcal{A}_1, \mathcal{A}_2 \subset \mathcal{A}$. However, requiring $\mathcal{T}_1^{\le 1}$ and $\T_2^{\le 0}$ to be compatible implies that $\hom(\mathcal{A}_1, \mathcal{A}_2)=0$, which is generally too restrictive.
\end{rem}

\begin{rem}\label{rem:torsionpair}
Theorem \ref{thm:decheart} is incredibly linked to torsion pairs (for an introduction of the concept, we refer to \cite[Section I.2]{hrs}). Let $\T$ be a triangulated category with a semiorthogonal decomposition $\langle \T_1 , \T_2 \rangle$ and a t-structure $\T^{\le 0}$ such that $\T_i^{\le 0} = \T^{\le 0}\cap \T_i$ is a t-structure on $\T_i$ for $i=1,2$. If these t-structures are compatible in $\T$, Theorem \ref{thm:decheart} gives rise to a t-structure $\T_{\#}^{\le 0}$, which is different from $\T^{\le 0}$. Indeed, $E \in \T_1^{\le 0}\cap \T_1^{\ge 0}$ is  an object in $(\T_{\#}^{\le 0}\cap \T_{\#}^{\ge 0})[-1]$.

As a matter of fact, $\T_{\#}^{\le 0}$ gives rise to a heart which is a tilted version of the heart $\mathcal{A}$ of $\T^{\le 0}$. This is simply true by picking the couple $\mathcal{F}= \mathcal{A} \cap \T_1$ and $\mathcal{T}= \mathcal{A}\cap \T_2$, which is a torsion pair by \cite[Exercise 6.5]{msch}.
\end{rem}
\begin{rem}
Theorem \ref{thm:decheart} is very similar to \cite[Theorem 1.4.10]{bbd}, which constructs global t-structures via recollements instead of semiorthogonal decompositions. Let us explain this relation in detail.

First of all, we recall that any recollement gives rise to a semiorthogonal decomposition. We consider $\T=\langle \T_1 ,\T_2 \rangle$ a semiorthogonal decomposition and let $\T_i^{\le 0}$ a t-structure on $\T_i$ for $i=1,2$. Then, under their respective assumptions, from Theorem \ref{thm:decheart} we get the global t-structure $\T_2^{\le 0} * (\T_1^{\le 0}[1])$, while \cite[Theorem 1.4.10]{bbd} gives the t-structure $\T_2^{\le 0} * \T_1^{\le 0}$. In other words, the new result gives a tilted version of the old statement (see Remark \ref{rem:torsionpair}).

Moreover, the two theorems deal with different situations. Indeed, although it is possible to construct a left adjoint $i^*$ to the inclusion $i_*:\T_1 \to \T$ (i.e. $\T_1$ is \emph{left admissible}) and a right adjoint $j^*$ to the inclusion $j_!:\T_2 \to \T$ (i.e. $\T_2$ is \emph{right admissible}) by \cite[Lemma 3.1]{bondal},
%. One may hope to obtain a recollement
%\[
%\begin{tikzcd}
%\T_1 \ar[r,"i_*"] & \T \ar[r,"j^*"]& \T_2
%\end{tikzcd}
%\]
%(we are using the notation of \cite[\S 1.4.3]{bbd}).
in general a left (respectively right) admissible subcategory does not need to be right (respectively left) admissible. 
Conversely, a recollement does not ensure that the compatibility requirement is satisfied, since $\T_2^{\ge 1}$ is not necessarily equal to $\T^{\ge 1}\cap \T_2$. 

Concerning our studies, Theorem \ref{thm:decheart} is to be preferred because it computes the homological dimension of the obtained heart; this is crucial, especially for Corollary \ref{cor:uniex}.
\end{rem}

The definition of compatible t-structures so that Theorem \ref{thm:decheart} holds can be generalized to semiorthogonal decompositions of any length, but the requirement may result unnatural since we need to consider some shifting. 
\begin{defn}
Let $\T = \langle \T_1, \dots, \T_m\rangle$ and assume $\T_i$ has a t-structure $\T_i ^{\le 0}$ for $i=1, \dots , m$. Then all the t-structures are \emph{compatible} if $\hom(\T_i^{\le 0}[k-i-1],\T_k^{\ge 1})=0$ for any $k>i$.
\end{defn}
With this notion of compatibility, an analogous of Theorem \ref{thm:decheart} can be obtained by recursion. With the same notation of the definition above, if $\mathcal{A}_i$ is the heart of $\T_i^{\le 0}$, the heart $\mathcal{A}\subset \T$ built via Theorem \ref{thm:decheart} is described as
\[
\mathcal{A}=\mathcal{A}_m*\mathcal{A}_{m-1}[1]* \dots * \mathcal{A}_2[m-2]* \mathcal{A}_1[m-1].\]

\begin{es}[Exceptional sequence]\label{es:exc}Let $\mathbb{K}$ be a field and consider a $\mathbb{K}$-linear triangulated category $\T$. We recall that an \emph{exceptional object} is an object $E \in \T$ such that 
\[
\hom(E,E[n])=\begin{cases}
\mathbb{K} & \text{if }n=0\\
0 & \text{otherwise.}
\end{cases}
\] 
A sequence of exceptional objects $E_1, \dots , E_m \in \T$, such that $\hom(E_i , E_j[n])=0$ for any $i>j$ and all $n$, is called \emph{exceptional sequence}.  It is \emph{full} if $\T$ is generated by $E_1 , \dots , E_m$, i.e. if $\mathcal{T}$ is exact equivalent to the smallest full triangulated subcategory of $\T$ containing $E_1, \dots , E_m$.

Consider a $\mathbb{K}$-linear triangulated category with a  full exceptional sequence $E_1, \dots , E_m$ such that $\bigoplus_i \hom(A,B[i])$ is a finite-dimensional vector space for any $A,B \in \T$\footnote{In fact, it suffices to require this property for $A,B \in \lbrace E_1 , \dots , E_m \rbrace$.}. By \cite[\S 1.4]{huy}, it is known that such a full exceptional sequence gives rise to a semiorthogonal decomposition given by $\T_i = \lbrace \bigoplus_\ell E_i^{\oplus b_\ell} [\ell] : b_\ell \in \mathbb{N} \rbrace$. We will use the notation $\langle E_1, \dots , E_m \rangle$ to indicate exceptional sequences. Notice that on each $\mathcal{T}_i$ we can consider a bounded t-structure with heart $\mathcal{A}_i=  \lbrace E_i^{\oplus b}: b \in \mathbb{N} \rbrace$.

If the full exceptional sequence is also \emph{strong}\footnote{This condition can be weakened.}, i.e. $\hom(E_i , E_j[n])=0$ for any $i,j$ and $n\ne 0$, the above t-structures are compatible: indeed, taking $k>i$,
\[
\hom \left (\bigoplus_{\ell\ge 0} E_i^{\oplus b_\ell} [\ell] [k-i-1] , \bigoplus_{j< 0} E_k^{\oplus c_j}[j]\right ) =0.
\] 
Moreover, the t-structure induced on $\mathcal{T}$ is bounded. %Notice also that the heart $\mathcal{A}$ obtained by applying Theorem \ref{thm:decheart} does not contain the exceptional sequence as one may hope, but in the case of length 2 this can be easily adjusted using a torsion pair (this follows from \cite[Exercise 6.5]{msch}).
\end{es}

\begin{cor}\label{cor:uniex} Let $\mathbb{K}$ be a field. Any $\mathbb{K}$-linear triangulated category $\T$ with a full strong exceptional sequence $\langle E_1 , E_2 \rangle $ such that $\dim_{\mathbb{K}}\hom (E_1 , E_2) < \infty$ is algebraic. In particular, $\T \cong \der(\operatorname{mod-}A)$, where $A=\operatorname{End}(\bigoplus_{i=1}^2 E_i)$.
\end{cor}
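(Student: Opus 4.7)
My plan is to construct a bounded t-structure on $\T$ from the exceptional pair via Theorem \ref{thm:decheart}, verify that its heart has homological dimension at most $1$, invoke Hubery's Theorem~3.2 to deduce a triangulated equivalence with the derived category of the heart, and then conclude using the Keller--Orlov theorem quoted in the introduction.

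First, Example \ref{es:exc} produces from $\langle E_1, E_2 \rangle$ a semiorthogonal decomposition $\T = \langle \T_1, \T_2 \rangle$ together with bounded t-structures on the $\T_i$ having hearts $\mathcal{A}_i = \{E_i^{\oplus b} : b \in \mathbb{N}\}$, and strongness of the sequence guarantees, as spelled out in that example, that these t-structures are compatible in $\T$. Theorem \ref{thm:decheart} then delivers a bounded t-structure on $\T$ with heart $\mathcal{A} = \mathcal{A}_2 * (\mathcal{A}_1[1])$.

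Next I would bound the homological dimension of this heart using part~(3) of Theorem \ref{thm:decheart}. Exceptionality of each $E_i$ gives $\dim_{\T_i}\mathcal{A}_i = 0$, while strongness of $\langle E_1, E_2 \rangle$ forces $\hom(E_1, E_2[m]) = 0$ for $m \neq 0$, so $\rdim_\T(\mathcal{A}_1, \mathcal{A}_2) \leq 0$. The formula thus yields $\dim_\T \mathcal{A} \leq \max\{0, 0, 1\} = 1$. The standing hypothesis $\dim_\mathbb{K}\hom(E_1, E_2) < \infty$ is what makes $\ext^1$ inside $\mathcal{A}$ finite-dimensional, which I anticipate to be among the finiteness conditions required to invoke Hubery's theorem.

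With the heart $\mathcal{A}$ of dimension at most $1$ in hand, Hubery's Theorem~3.2 provides a triangulated equivalence $\T \cong \der(\mathcal{A})$. Since $\der(\mathcal{A})$ carries a canonical dg enhancement, $\T$ is algebraic; the Keller--Orlov theorem then applies, giving $\T \cong \der(\operatorname{mod-}A)$ with $A = \operatorname{End}(E_1 \oplus E_2)$. The main obstacle I foresee is the bookkeeping step of verifying that the precise hypotheses of Hubery's theorem match the output of Theorem \ref{thm:decheart} and that all the finiteness conditions are satisfied; once that is done, every step is a direct invocation of a result already established.
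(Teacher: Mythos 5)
Your proof is correct and follows the same route as the paper's: combine Example \ref{es:exc} and Theorem \ref{thm:decheart} to produce a heart of homological dimension at most $1$, then apply Hubery's theorem to get algebraicity and Keller--Orlov to identify $\T$ with $\der(\operatorname{mod-}A)$. You simply make explicit the dimension calculation (via part 3 of Theorem \ref{thm:decheart}) and the role of the finiteness hypothesis that the paper's one-line proof leaves implicit.
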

\begin{proof}
Theorem \ref{thm:decheart} and Example \ref{es:exc} prove that $\T$ has a heart of dimension at most 1. By \cite[Theorem 3.2]{hubery}, $\T$ is algebraic. We conclude by \cite[Corollary 1.9]{orlov}.
\end{proof}
\begin{es}
By the previous corollary, $\der(\mathbb{P}_{\mathbb{K}}^1)$ is the unique $\mathbb{K}$-linear triangulated category with a full strong exceptional sequence $\langle E_1 , E_2 \rangle$ such that $\dim_{\mathbb{K}}\hom(E_1 , E_2)=2$.
\end{es}

\section{Quivers}\label{sec:quiver}
In order to study exceptional sequences of length greater than 2, we will need some basic knowledge on quivers. Here we will give a brief introduction, mostly following \cite[Section 5]{bondal}.

\begin{defn}
A \emph{quiver} is a quadruple $Q=(Q_0 , Q_1, s,t)$, where $Q_0$ is a set of vertices, $Q_1$ a set of arrows between vertices and $s,t:Q_1 \to Q_0$ are the maps indicating source and target respectively. A quiver is \emph{finite} if $Q_0$ and $Q_1$ are finite. It is \emph{ordered} if the vertices are ordered and for every arrow $a$, $s(a)\le t(a)$.

A \emph{path $p$ of length} $n$ is a sequence of arrows $a_1 , \dots  ,a_n \in Q_1$ such that $t(a_i)=s(a_{i+1})$. Moreover, with the same notation, $t(p):=t(a_n)$ and $s(p):=s(a_1)$. We also allow paths of length $0$: such paths are in correspondence with the vertices. Let $p,q$ be two paths. Then the composition of paths $q\circ p$ is defined to be the concatenated path whenever $s(q)=t(p)$.

Let $\mathbb{K}$ be a field. The \emph{path algebra} $\mathbb{K}Q$ is the $\mathbb{K}$-vector space with basis the paths. The product is described as follows:
\[
\lambda q \cdot \mu p = \begin{cases}
(\lambda \mu) \ q \circ p & \text{if }s(q)=t(p)\\
0 & \text{otherwise}
\end{cases}
\]
where $\lambda, \mu \in \mathbb{K}$ and $p,q$ are paths. In particular, paths of length $0$ are idempotents in $\mathbb{K}Q$.

If $S\subset \mathbb{K}Q$ is any subset, $(Q,S)$ is called \emph{quiver with relations} and its associated path algebra is given by $\mathbb{K}Q/\langle S \rangle$, where $\langle S \rangle$ is the ideal generated by $S$.
\end{defn}

Now, let us consider $A= \mathbb{K}Q /\langle S \rangle$ the path algebra associated to the quiver with relations $(Q,S)$. A left $A$-module is a vector space $V$ over $\mathbb{K}$ with the left action of the algebra $A$. This is also called \emph{representation of a quiver}. Using the paths of length 0, which are associated to the vertices of $Q$, then $V$, as a vector space, decomposes into a direct sum $\bigoplus_{i \in Q_0} V_i$, where $V_i$ is the vector space associated to the vertex $i$. Moreover, for every path $p \in A$, we get a linear operator $V_{s(p)} \to V_{t(p)}$.

When dealing with right $A$-modules, one can consider the \emph{opposite quiver} $Q^{\operatorname{op}}$ where $s,t$ are swapped with respect to $Q$. In other words, arrows go in the other direction, analogously to what happens with the notion of opposite category. As one expects, left modules associated to $(Q^{\operatorname{op}},S^{\operatorname{op}})$ are right modules of $A$.

In case the quiver $Q$ is finite and ordered, let $X_1, \dots , X_n$ be the vertices and $p_i$ the idempotent in $A$ associated to $X_i$. Every right $A$-module $V$ has a decomposition $V=\bigoplus_{i \in Q_0} G_i V = \bigoplus_{i \in Q_0} Vp_i$. 

Let us denote with $S_i$ the representation for which $G_j S_i = \delta_{ij} \mathbb{K}$, where $\delta_{ij}$ is the Kronecker delta, and all arrows are represented by the zero morphisms. Notice that for each right $A$-module $V$ we can create a filtration
\begin{equation}\label{eq:filt}
0=F^0 V\hookrightarrow F^1 V= G^1V \hookrightarrow F^2 V= \bigoplus_{j=1}^2 G_j V \hookrightarrow \dots \hookrightarrow F^{n-1} V= \bigoplus_{j=1}^{n-1} G_j V \hookrightarrow F^n V= V
\end{equation}
such that each quotient $F^i V / F^{i-1} V$ is a direct sum of copies of $S_i$. Projective modules are $P_i=p_i A$ and the decomposition $A= \bigoplus_{i=1}^n P_i$ holds. As a matter of fact,
\[
A= \hom_A (A,A) =\hom_A\left(\bigoplus_{i=1}^n P_i , \bigoplus_{i=1}^n P_i\right)= \bigoplus_{i,j} \hom(P_i , P_j). 
\]
These isomorphisms allow to interpret the arrows of a quiver as  morphisms between projective modules. In particular, being $A$ the path algebra of an ordered quiver, $\hom(P_i , P_j)=0$ for $i>j$. Furthermore, it is possible to consider the exact sequence
\begin{equation}\label{eq:sesproj}
0 \to F^{i-1} P_i \to P_i \to S_i \to 0
\end{equation}
for every $i =1, \dots , n$. Notice that $P_1 =S_1$.

Let $\T$ be a $\mathbb{K}$-linear algebraic triangulated category with a full strong exceptional sequence $\langle E_1 , \dots , E_n\rangle$. Then $A=\operatorname{End}(\bigoplus_{i=1}^n E_i)$ is the path algebra of an ordered and finite quiver with relations. In particular, the equivalence $F:\T\to \der(\operatorname{mod-}A)$ obtained in \cite[Corollary 1.9]{orlov} is such that $F(E_i)=P_i$, the projective modules of the path algebra $A$. 

\section{Filtered enhancements}
In this section, we explore the definition of filtered triangulated categories and give a fairly simple result that has not been found in the literature, namely if a triangulated category admits a filtered enhancement, then every full triangulated subcategory admits a filtered enhancement in a natural way (see Proposition \ref{prop:subfilt}). Main reference is \cite[Appendix A]{beilinson}. In Remark \ref{rem:fcat7}, we discuss the relation of filtered enhancements with realization functors.

\begin{defn} \label{def:filt}
Let us consider a quintuple $(\mathcal{F}, \mathcal{F}(\le 0) , \mathcal{F}(\ge 0) , s, \alpha)$, where $\mathcal{F}$ is a triangulated category, $\mathcal{F}(\le 0 )$ and $\mathcal{F}(\ge 0)$ are strict full triangulated subcategories, $s: \mathcal{F} \to \mathcal{F}$ is an exact equivalence and $\alpha: \id_{\mathcal{F}} \to s$ is a natural transformation. We set $\mathcal{F}(\le n)=s^n \mathcal{F}(\le 0)$ and $\mathcal{F}(\ge n)=s^n \mathcal{F}(\ge 0)$. In this picture, $\mathcal{F}$ is called \emph{filtered triangulated category} if it satisfies the following axioms:
\begin{description}
\item[fcat1] $\mathcal{F}(\le 0) \subset \mathcal{F}(\le 1)$ and $\mathcal{F}(\ge 1) \subset \mathcal{F}(\ge 0)$.
\item[fcat2] $\mathcal{F}= \bigcup_n \mathcal{F}(\le n)=\bigcup_n \mathcal{F}(\ge n)$.
\item[fcat3] $\hom(\mathcal{F}(\ge 1 ), \mathcal{F}(\le 0))=0$.
\item[fcat4] For any $X \in \mathcal{F}$ there exists a distinguished triangle $A \to X \to B \to A[1]$ where $A \in \mathcal{F}(\ge 1)$ and $B \in \mathcal{F}(\le 0)$; in other words, $\mathcal{F}=\mathcal{F}(\ge 1) * \mathcal{F}(\le 0)$.
\item[fcat5] For any $X \in \mathcal{F}$, it holds that $\alpha_{s(X)}=s(\alpha_X)$.
\item[fcat6] For any $X \in \mathcal{F}(\ge 1)$ and $Y \in \mathcal{F}(\le 0)$, $\alpha$ induces isomorphisms
\[
\hom (Y,X) \cong \hom(Y,s^{-1}X) \cong \hom (sY,X).
\]
\end{description}
A triangulated category $\T$ admits a \emph{filtered enhancement} if there exists a filtered triangulated category $\mathcal{F}$ such that $\T \cong \mathcal{F}(\le 0) \cap \mathcal{F}(\ge 0)$ in the sense of triangulated categories. With an abuse of notation, we will always assume that $\T=\mathcal{F}(\le 0) \cap \mathcal{F}(\ge 0)$.
\end{defn}

\begin{prop}\emph{\cite[Proposition A.3]{beilinson}.}\label{prop:filt}
Let $\mathcal{F}$ be a filtered triangulated category. Then the following assertions hold true:
\begin{enumerate}
\item The inclusion $i_{\le n}:\mathcal{F}(\le n) \to \mathcal{F}$ has a left adjoint $\sigma_{\le n}$, and the inclusion $i_{\ge n}:\mathcal{F}(\ge n) \to \mathcal{F}$ has a right adjoint $\sigma_{\ge n}$. In particular, these adjoints are exact (see, for instance, \cite[Proposition 1.41]{huy}).
\item There is a unique natural transformation $\delta: \sigma_{\le n} \to \sigma_{\ge n+1}[1]$ such that, for any $X \in \mathcal{F}$,
\[
\sigma_{\ge n+1} (X) \to X \to \sigma_{\le n}(X) \overset{\delta(X)}{\to} \sigma_{\ge n+1}(X)[1]
\]
is a distinguished triangle. Up to unique isomorphism, this is the only distinguished triangle $A \to X \to B \to A[1]$ with $A \in \mathcal{F}(\ge n+1)$ and $B \in \mathcal{F}(\le n)$.
\item For any two integers $m,n$, we have the following natural isomorphisms:
\[
\sigma_{\le m} \sigma_{\le n}\cong \sigma_{\le \min\lbrace m,n\rbrace} , \qquad \sigma_{\ge m}\sigma_{\ge n} \cong \sigma_{\le \max \lbrace m,n \rbrace}, \qquad \sigma_{\ge m} \sigma_{\le n} \cong \sigma_{\le n} \sigma_{\ge m}.
\]
\end{enumerate}
\end{prop}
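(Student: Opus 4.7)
The plan is first to observe that axioms \textbf{fcat3} and \textbf{fcat4} say exactly that $\mathcal{F}=\mathcal{F}(\ge 1)*\mathcal{F}(\le 0)$ with $\hom(\mathcal{F}(\ge 1),\mathcal{F}(\le 0))=0$; equivalently, $\mathcal{F}$ admits the semiorthogonal decomposition $\langle\mathcal{F}(\le 0),\mathcal{F}(\ge 1)\rangle$ in the sense of Section 2, and applying the autoequivalence $s^n$ gives the analogous decomposition $\langle\mathcal{F}(\le n),\mathcal{F}(\ge n+1)\rangle$ for every $n\in\mathbb{Z}$. For item 1, I would invoke the standard fact (cf.\ \cite[Lemma 3.1]{bondal}) that in any such decomposition each inclusion admits an adjoint on the appropriate side: concretely, the assignments $X\mapsto B$ and $X\mapsto A$ coming from the triangle of \textbf{fcat4} are functorial by the Hom-vanishing of \textbf{fcat3}, and produce $\sigma_{\le 0}$ and $\sigma_{\ge 1}$; the general case is obtained by conjugating with $s^n$. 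Exactness is automatic, being a standard property of adjoints to exact inclusions between triangulated categories. Item 2 is then a restatement of the same truncation triangle, with uniqueness up to unique isomorphism encoded once more by \textbf{fcat3} applied to morphisms between two such triangles.

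For item 3, the first two isomorphisms reduce to a short Yoneda argument. If $m\ge n$ then $\sigma_{\le n}X\in\mathcal{F}(\le n)\subset\mathcal{F}(\le m)$ by \textbf{fcat1}, so $\sigma_{\le m}$ acts as the identity on it; if $m\le n$ the chain
\[
\hom(\sigma_{\le m}\sigma_{\le n}X,Y)\cong\hom(\sigma_{\le n}X,Y)\cong\hom(X,Y),\qquad Y\in\mathcal{F}(\le m)\subset\mathcal{F}(\le n),
\]
identifies $\sigma_{\le m}\sigma_{\le n}$ with $\sigma_{\le m}$ by Yoneda. The symmetric argument handles $\sigma_{\ge m}\sigma_{\ge n}\cong\sigma_{\ge\max\{m,n\}}$, which is evidently the intended form of the middle isomorphism.

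The commutation $\sigma_{\ge m}\sigma_{\le n}\cong\sigma_{\le n}\sigma_{\ge m}$ is the main obstacle, and I would dispatch it with the octahedral axiom. When $n<m$, both sides lie in $\mathcal{F}(\ge m)\cap\mathcal{F}(\le n)$, which vanishes by \textbf{fcat3} (after applying $s^{m-1}$), so both composites are zero. When $n\ge m$, apply the octahedron to the composition $\sigma_{\ge n+1}X\to\sigma_{\ge m}X\to X$, where the first map exists because $\mathcal{F}(\ge n+1)\subset\mathcal{F}(\ge m)$ by \textbf{fcat1}. Letting $C$ denote the cone of $\sigma_{\ge n+1}X\to\sigma_{\ge m}X$, the triangle $\sigma_{\ge n+1}X\to\sigma_{\ge m}X\to C\to\sigma_{\ge n+1}X[1]$ forces $C\in\mathcal{F}(\ge m)$, while the third edge of the octahedron $C\to\sigma_{\le n}X\to\sigma_{\le m-1}X\to C[1]$ forces $C\in\mathcal{F}(\le n)$. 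Uniqueness from item 2 then identifies $C$ simultaneously with $\sigma_{\le n}\sigma_{\ge m}X$ (viewing the first triangle as truncation of $\sigma_{\ge m}X$ at cut $n$) and with $\sigma_{\ge m}\sigma_{\le n}X$ (viewing the second as truncation of $\sigma_{\le n}X$ at cut $m-1$), producing the desired isomorphism. The subtle point I expect to have to argue carefully is the naturality of this identification in $X$: this amounts to verifying that the octahedron can be chosen functorially, which is routine once the uniqueness of item 2 is in hand, but it is the only step where the filtered structure interacts with more than one of its cuts at once.
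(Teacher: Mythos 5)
Your proposal is correct. For the only part the paper itself proves --- the first two isomorphisms in item 3 --- your Yoneda argument ($m\ge n$: absorb using $\sigma_{\le m}i_{\le m}\cong\id$; $m\le n$: compose adjunctions and invoke uniqueness of adjoints) is essentially identical to the paper's. You also rightly flag the typo $\sigma_{\ge m}\sigma_{\ge n}\cong\sigma_{\le\max\{m,n\}}$, which should read $\sigma_{\ge\max\{m,n\}}$.

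Where you diverge is that you prove considerably more than the paper does. The paper treats items 1, 2 and the commutation $\sigma_{\ge m}\sigma_{\le n}\cong\sigma_{\le n}\sigma_{\ge m}$ as imported verbatim from Beilinson's Proposition A.3, with the stated proof covering only the two absorption isomorphisms. You instead give a self-contained argument: item 1 by recognizing \textbf{fcat3}--\textbf{fcat4} as the semiorthogonal decomposition $\langle\mathcal{F}(\le 0),\mathcal{F}(\ge 1)\rangle$ and invoking left/right admissibility (Bondal), then conjugating by $s^n$; item 2 as a restatement of the truncation triangle whose uniqueness is forced by \textbf{fcat3}; and the commutation by an octahedron on $\sigma_{\ge n+1}X\to\sigma_{\ge m}X\to X$, with the cone $C$ pinned down simultaneously as $\sigma_{\le n}\sigma_{\ge m}X$ and $\sigma_{\ge m}\sigma_{\le n}X$ by the uniqueness in item 2. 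The octahedral argument checks out, including the degenerate case $n<m$ where both composites vanish (your phrasing ``both sides lie in $\mathcal{F}(\ge m)\cap\mathcal{F}(\le n)$'' is a small shortcut --- a priori one side lies in $\mathcal{F}(\ge m)$ and the other in $\mathcal{F}(\le n)$, and one first shows each is zero via \textbf{fcat3} and adjunction --- but the conclusion is right). You also correctly identify naturality as the delicate point, and it is indeed dispatched by the uniqueness clause of item 2. In short: the paper's proof is a two-line gloss on Beilinson; yours is a full reconstruction built on the paper's own Section 2 machinery, and it buys a proof with no external dependency, at the cost of length.
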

\begin{proof}[Part of the proof]
We want to prove the first two isomorphisms of item 3, since it is the only part of the statement not considered in \cite{beilinson}. Being the reasoning analogous, let us focus just on the first isomorphism. Let $X \in \mathcal{F}$. If $m\ge n$, then $\mathcal{F}(\le m) \supset \mathcal{F}(\le n)$. We recall that $\sigma_{\le m} i_{\le m} \cong \id$ because the inclusion $i_{\le m}$ is fully faithful. Since $\sigma_{\le n}(X) \in \mathcal{F}(\le m)$, we simply have that $\sigma_{\le m}\sigma_{\le n} (X) \cong \sigma_{\le n} (X)$ by the natural isomorphism mentioned before. We conclude that $\sigma_{\le m} \sigma_{\le n}\cong\sigma_{\le n}$. 

Let $m\le n$, so that $\mathcal{F}(\le m)\subset \mathcal{F}(\le n)$. Then, by adjunction, we have the following isomorphisms for any $X \in \mathcal{F}$ and $Y \in \mathcal{F}(\le m)$:
\begin{equation*}
\begin{split}
\hom_{\mathcal{F}(\le m)} (\sigma_{\le m} \sigma_{\le n} (X) , Y)&\cong \hom_{\mathcal{F}} ( \sigma_{\le n} (X) , i_{\le m} (Y))\\
&\cong \hom_{\mathcal{F}}( X, i_{\le n} i_{\le m} (Y)) \\
&
\cong \hom_{\mathcal{F}} (X, i_{\le m} (Y)).
\end{split}
\end{equation*}
In particular, $\sigma_{\le m} \sigma_{\le n}$ is left adjoint to $i_{\le m}$. Since adjoints are determined up to a natural isomorphism, $\sigma_{\le m}\sigma_{\le n} \cong \sigma_{\le m}$ as wanted.
%By the fact that $\sigma_{\le n}$ is a left adjoint of $i_{\le n}$, there is a unique map $\sigma_{\le n}(X) \to \sigma_{\le m}(X)$ so that the composition with $X \to \sigma_{\le n}(X)$, having cone $\sigma_{\ge n+1}(X)[1]$, gives exactly the morphism $X \to \sigma_{\le m}(X)$ with cone $\sigma_{\ge m+1}(X)[1]$. By the octahedral axiom, 
%\[
%Z:=\cone(\sigma_{\le n}(X) \to \sigma_{\le m}(X))= \cone (\sigma_{\ge n+1} (X)[1] \to \sigma_{\ge m+1}(X)[1]).
%\]
%Since $m\le n$, the inclusion $\mathcal{F}(\ge n+1) \subset \mathcal{F}(\ge m+1)$ holds. Being $\mathcal{F}(\ge m+1)$ a full triangulated subcategory of $\mathcal{F}$, we have that $Z \in \mathcal{F}(\ge m+1)$. Therefore, we have a distinguished triangle
%\[
%Z[-1] \to \sigma_{\le n}(X) \to \sigma_{\le m}(X) \to Z
%\]
%with $Z[-1] \in \mathcal{F}(\ge m+1)$ and $\sigma_{\le m}(X) \in \mathcal{F}(\le m)$. The uniqueness of distinguished triangle
%\[
%\sigma_{\ge m+1}\sigma_{\le n}(X) \to \sigma_{\le n}(X) \to \sigma_{\le m}\sigma_{\le n}(X) \to 
%\sigma_{\ge m+1}\sigma_{\le n}(X) [1]
%\]
%explained in item 2 shows that $\sigma_{\le m}(X) \cong \sigma_{\le m}\sigma_{\le n}(X)$.
\end{proof}

\begin{rem}\label{rem:ssigma}
By item 2 of Proposition \ref{prop:filt}, we also have the following isomorphisms:
\[
s \sigma_{\le n} \cong \sigma_{\le n+1} s, \qquad s \sigma_{\ge n} \cong \sigma_{\ge n+1} s.
\]
\end{rem}
Let us set $\gr^n:= \sigma_{\le n}\sigma_{\ge n}$. This is not the definition used in \cite{beilinson}, but it will come in handy in the proof of the following statement.

\begin{prop}\label{prop:subfilt}
Let $\T$ be a triangulated category admitting a filtered enhancement $\mathcal{F}$. Then any full triangulated subcategory $\mathcal{S}$ of $\T$ has a filtered enhancement given by the full subcategory $\mathcal{G}$ of $\mathcal{F}$ with objects
\[
\lbrace X \in \mathcal{F} \mid s^{-n} \gr^n (X) \in \mathcal{S} \ \forall n \rbrace.\]
\end{prop}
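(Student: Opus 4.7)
The strategy is to equip $\mathcal{G}$ with the subcategories $\mathcal{G}(\le n):= \mathcal{G}\cap \mathcal{F}(\le n)$ and $\mathcal{G}(\ge n):=\mathcal{G}\cap \mathcal{F}(\ge n)$, together with the restrictions of $s$ and $\alpha$, and then verify both the identity $\mathcal{G}(\le 0)\cap \mathcal{G}(\ge 0) = \mathcal{S}$ and the six axioms of Definition \ref{def:filt}. The key preliminary observation is that the condition cutting out $\mathcal{G}$ behaves well under $s$: Remark \ref{rem:ssigma} yields $\gr^n\circ s\cong s\circ \gr^{n-1}$, so $s^{-n}\gr^n(sX)\cong s^{-(n-1)}\gr^{n-1}(X)$, which gives $sX\in \mathcal{G}\iff X\in \mathcal{G}$. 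Together with exactness of the $\sigma_{\le n}$ and $\sigma_{\ge n}$ (Proposition \ref{prop:filt}.1) and the fact that $\mathcal{S}\subset \T$ is triangulated, this makes $\mathcal{G}$ a full triangulated subcategory of $\mathcal{F}$ on which $s$ restricts to an exact equivalence.

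The identification $\mathcal{G}(\le 0)\cap \mathcal{G}(\ge 0) = \mathcal{S}$ is then immediate from uniqueness in Proposition \ref{prop:filt}.2: for $X\in \T = \mathcal{F}(\le 0)\cap \mathcal{F}(\ge 0)$ one has $\gr^n X = 0$ for $n\ne 0$ and $\gr^0 X = X$, so the defining condition of $\mathcal{G}$ collapses to $X\in \mathcal{S}$. Axioms \textbf{fcat1}, \textbf{fcat2}, \textbf{fcat3} and \textbf{fcat5} are inherited by restriction (for \textbf{fcat2} using that any $X\in \mathcal{G}$ with $X\in \mathcal{F}(\le n)$ belongs to $\mathcal{G}(\le n)$ by definition); \textbf{fcat6} passes to $\mathcal{G}$ because fullness preserves the relevant hom sets, while the objects $s^{-1}X$ and $sY$ needed for the isomorphisms lie in $\mathcal{G}$ by the preliminary step.

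The only substantive point, and the one I expect to be the main obstacle, is \textbf{fcat4}: for $X\in \mathcal{G}$, both outer terms of the canonical triangle $\sigma_{\ge 1}X\to X\to \sigma_{\le 0}X\to \sigma_{\ge 1}X[1]$ from Proposition \ref{prop:filt}.2 must be shown to lie in $\mathcal{G}$. I would compute $\gr^n(\sigma_{\le 0}X)$ via a case split, using Proposition \ref{prop:filt}.3: for $n\le 0$ the commutations $\sigma_{\ge n}\sigma_{\le 0}\cong \sigma_{\le 0}\sigma_{\ge n}$ and $\sigma_{\le n}\sigma_{\le 0}\cong \sigma_{\le n}$ collapse $\gr^n\sigma_{\le 0}X$ to $\gr^n X$, while for $n > 0$ the same commutation identifies $\sigma_{\ge n}\sigma_{\le 0}X$ with an object of $\mathcal{F}(\le 0)\cap \mathcal{F}(\ge n) = 0$, the vanishing being an immediate consequence of \textbf{fcat3}. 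In either case $s^{-n}\gr^n \sigma_{\le 0}X$ belongs to $\mathcal{S}$, so $\sigma_{\le 0}X\in \mathcal{G}$; a symmetric split handles $\sigma_{\ge 1}X$ and completes the proof, the remainder being formal.
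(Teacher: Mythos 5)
Your proof is correct and takes essentially the same approach as the paper: identical setup with $\mathcal{G}(\le n)=\mathcal{G}\cap\mathcal{F}(\le n)$, the same use of Remark \ref{rem:ssigma} to show $s$ preserves $\mathcal{G}$, and the same reduction of \textbf{fcat4} to showing $\sigma_{\le 0}X,\sigma_{\ge 1}X\in\mathcal{G}$ via the commutation isomorphisms of Proposition \ref{prop:filt}.3 together with an appeal to \textbf{fcat3}. The only presentational difference is that the paper first conjugates through $s^{-n}$ to arrive at $\sigma_{\le -n}(A)$ with $A=s^{-n}\gr^n(X)\in\mathcal{S}$ and splits on the sign of $-n$, whereas you perform the analogous case split directly at the level of $\gr^n\sigma_{\le 0}X$, which is a cosmetic variation of the same calculation.
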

\begin{proof}
First of all, we would like to show that $\mathcal{G}$ is a triangulated subcategory of $\mathcal{F}$. Notice that the shift functor of $\mathcal{F}$ obviously restricts to $\mathcal{G}$ since $s^{-n}\gr^n$ is exact, being composition of exact functors. Let us consider $X \to Y$ with $X,Y \in \mathcal{G}$. This gives a distinguished triangle $X \to Y \to Z \to X[1]$ in $\mathcal{F}$. We get that \[
s^{-n} \gr^n (X) \to s^{-n}\gr^n (Y) \to s^{-n}\gr^n(Z) \to s^{-n} \gr^n(X[1])
\]
is a distinguished triangle in $\T$, with $s^{-n} \gr^n (X)$ and $s^{-n} \gr^n (Y)$ objects of $\mathcal{S}$. This suffices to conclude that $s^{-n} \gr^n (Z) \in \mathcal{S}$, so that $Z \in \mathcal{G}$. Next, we set $\mathcal{G}(\le 0):= \mathcal{G}\cap \mathcal{F}(\le 0)$ and $\mathcal{G}(\ge 0):= \mathcal{G}\cap \mathcal{F}(\ge 0)$. We would like to prove that the autoequivalence $s: \mathcal{F} \to \mathcal{F}$ can be restricted to $\mathcal{G}$. Let $X \in \mathcal{G}$. Then, by Remark \ref{rem:ssigma}, we have 
\begin{equation*}
\begin{split}
s^{-n} \gr^n (sX) & = s^{-n} \sigma_{\le n}\sigma_{\ge n} s(X)\\
& \cong s^{-n} \sigma_{\le n} s \sigma_{\ge n-1} (X) \\
&\cong s^{-n} s \sigma_{\le n-1} \sigma_{\ge n-1}(X) \\
&= s^{-n+1} \gr^{n-1}(X) \in \mathcal{S}.
\end{split}
\end{equation*}
So we can restrict $s$ and create an exact autoequivalence $s:\mathcal{G}\to \mathcal{G}$, called $s$ as well by an abuse of notation. Of course, the restriction of $\alpha: \id_{\mathcal{F}} \to s$ gives us the required natural transformation and fcat5 is ensured. We set $\mathcal{G}(\le n)$ and $\mathcal{G}(\ge n)$ via $s$ as described in Definition \ref{def:filt}. Being $s$ an equivalence, we have the following
\[
\mathcal{G}(\ge n) = s^n (\mathcal{G}(\ge 0)) = s^n (\mathcal{G}\cap \mathcal{F}(\ge 0)) = s^n (\mathcal{G}) \cap s^n (\mathcal{F}(\ge 0)) = \mathcal{G} \cap \mathcal{F}(\ge n),
\]
and analogously $\mathcal{G}(\le n) = \mathcal{G}\cap \mathcal{F}(\le n)$. This immediately shows that fcat1,2,3,6 hold. As fcat5 has already been dealt with, it remains to show fcat4. In order to do that, we recall the distinguished triangle in item 2 of Proposition \ref{prop:filt}. Therefore, the statement is reduced to establish that the images of $\sigma_{\le n}$ and $\sigma_{\ge n}$ are in $\mathcal{G}(\le n)$ and $\mathcal{G}(\ge n)$ respectively, so that these functors are adjoints to the inclusions as in $\mathcal{F}$.
Let $X \in \mathcal{G}$ and consider $\sigma_{\le m}$. By item 3 of Proposition \ref{prop:filt} and Remark \ref{rem:ssigma} the following isomorphisms hold:
\begin{equation*}
\begin{split}
s^{-n} \gr^n (\sigma_{\le m} X) &= s^{-n} \sigma_{\le n} \sigma_{\ge n} \sigma_{\le m} (X) \\
&\cong s^{-n} \sigma_{\le n} \sigma_{\le m} \sigma_{\ge n} (X)\\ 
&\cong s^{-n} \sigma_{\le m} \sigma_{\le n} \sigma_{\ge n} (X)\\
&
\cong \sigma_{\le m-n} s^{-n} \sigma_{\le n} \sigma_{\ge n} (X) .
\end{split}
\end{equation*}
In particular, $s^{-n}\gr^n(\sigma_{\le m}X)\cong \sigma_{\le m-n} (A)$, where $A \in \mathcal{S}$. If $m-n\ge 0$, we have the following inclusions: \[
A \in \mathcal{S}\subset \T \subset \mathcal{F}(\le 0)\subset \mathcal{F}(\le m-n),\]
so $\sigma_{\le m-n}(A)=A$. If $m-n<0$, being $A \in \mathcal{F}(\ge 0)$ it holds that $\hom(A,\sigma_{\le m-n}(A))=0$ by fcat3. In particular, item 2 of Proposition \ref{prop:filt} entails that $\sigma_{\le m-n}(A)=0$. As wanted, $s^{-n}\gr^n(\sigma_{\le m}X)\in \mathcal{S}$, so that $\sigma_{\le m}X \in \mathcal{G}$. With a similar reasoning, one can prove that $\sigma_{\ge m} X \in \mathcal{G}$.
\end{proof}
The reason why filtered enhancements become of great interest is their relation with realization functors. 
\begin{defn}
Let $\T$ be a triangulated category. Given a heart (of a bounded t-structure) $\mathcal{A}\subset \T$, we call \emph{realization functor} (of $\mathcal{A}$ in $\T$) an exact functor $\mathsf{real}:\der(\mathcal{A}) \to \T$ such that $\mathsf{real}_{\mid \mathcal{A}}=\id_{\mathcal{A}}$.
\end{defn}
\begin{rem}\label{rem:fcat7}
In \cite[Appendix]{beilinson}, it is proven that every triangulated category with a filtered enhancement admits a realization functor for any heart. However, some authors point out that an additional requirement, called fcat7, may be necessary to provide the result (see \cite[Appendix A]{psa} for further details).  

For the sake of completeness, let us state this new axiom using the same notation of Definition \ref{def:filt}.
\begin{description}
\item[fcat7] Given any morphism $f: X \to Y$ in $\mathcal{F}$, the diagram
\[
\begin{tikzcd}[column sep=large]
\sigma_{\ge 1 }(X) \ar[r]\ar[d,"\alpha_{\sigma_{\ge 1}(Y)} \sigma_{\ge 1}(f)"] & X \ar[r]\ar[d,"\alpha_Y f"] &\sigma_{\le 0}(X)\ar[r,"\delta(X)"]\ar[d,"\alpha_{\sigma_{\le 0}(Y)} \sigma_{\le 0}f"] &\sigma_{\ge 1 }(X)[1] \arrow{d}{\alpha_{\sigma_{\ge 1}(Y)} \sigma_{\ge 1}(f)[1]}\\
s(\sigma_{\ge 1}Y)\ar[r]&s(Y)\ar[r]&s(\sigma_{\le 0}Y)\ar[r] &s(\sigma_{\ge 1}Y)[1]
\end{tikzcd}
\]
can be extended to a $3 \times 3$-diagram whose rows and columns are distinguished triangles. 
\end{description}

Once ensured that $\mathcal{F}$ satisfies fcat7, it is easy to prove that also $\mathcal{G}$ as defined in Proposition \ref{prop:subfilt} fulfills fcat7. This will be key in what follows.
\end{rem}

\section{Realized triangulated categories}

This section revolves around the unconventional notion of realized triangulated categories. After the definition, we will give some large classes of examples studied in the literature and prove a crucial result, Proposition \ref{prop:realf}. As an application, the generalization of Bondal's theorem \cite[Theorem 6.2]{bondal} is ensured for realized triangulated categories.
\begin{defn}

A triangulated category $\T$ is called \emph{realized} if for every heart $\mathcal{A}$ of every full triangulated subcategory $\mathcal{S} \subset \mathcal{T}$ there exists a {realization functor} $\mathsf{real}:\der(\mathcal{A}) \to \mathcal{S}\subset \T$.
%In other words, all admissible abelian subcategories $\mathcal{A}$, in the sense specified in \cite[Section 2]{hubery}, admit a realization functor $\mathsf{real}:\der(\mathcal{A}) \to \T$.
\end{defn}
\begin{es}\label{es:real}\item 
\begin{enumerate}
\item Triangulated categories with a filtered enhancement are realized, as discussed in Proposition \ref{prop:subfilt} and Remark \ref{rem:fcat7}.
\item Algebraic triangulated categories are realized by \cite[Theorem 3.2]{kelvos}, where the first item is proved in detail in \cite[Section 4]{kelch}.
In fact, every algebraic triangulated category has a filtered enhancement, as proved in \cite[Proposition 3.8]{chenri}, but fcat7 has not been investigated.

\item Every triangulated category which is the underlying category of a stable derivator admits a filtered enhancement; this is the content of \cite{modoi}. 

%This class of examples is linked to algebraic triangulated categories in the following way. 
%The interested reader may refer to \cite{cohn,ornaghi}, There is a strong correspondence between $A_\infty$-categories and $\infty$-stable categories is studied. We emphasize that any suitably complete and cocomplete $\infty$-stable category has an underlying stable derivator. 

Furthermore, topological triangulated categories obtained by stable combinatorial model categories are filtered by \cite[Example 4.2]{groth}. In particular, there are examples of triangulated categories with a filtered enhancement which are not algebraic.
\end{enumerate}
\end{es}

\begin{prop}\label{prop:realf}\emph{\cite[Corollary 2.8]{chenhanzhou}.}
Let $\T$ be a realized triangulated category and let $\mathcal{A}$ be a heart on $\T$. The following assertions are equivalent:
\begin{enumerate}
\item $\T$ has all the Ext groups of $\mathcal{A}$ according to Definition \ref{def:allext}.
\item The realization functor $\mathsf{real}:\der(\mathcal{A}) \to \T$ is fully faithful.
\item The realization functor is full.
\end{enumerate} 
Moreover, under such circumstances, $\mathsf{real}$ is an exact equivalence.
\end{prop}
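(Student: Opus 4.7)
The plan is to establish the cyclic implications $(1) \Rightarrow (2) \Rightarrow (3) \Rightarrow (1)$, and then promote fully faithfulness to essential surjectivity by exploiting the boundedness of the t-structure with heart $\mathcal{A}$.

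For $(1) \Rightarrow (2)$, I would exploit the exactness of $\mathsf{real}$ together with the identification $\mathsf{real}_{\mid \mathcal{A}} = \id_{\mathcal{A}}$ to show by dévissage that the comparison map
\[
\hom_{\der(\mathcal{A})}(X, Y[n]) \longrightarrow \hom_{\T}(\mathsf{real}(X), \mathsf{real}(Y)[n])
\]
is bijective for all bounded $X, Y \in \der(\mathcal{A})$ and all $n \in \mathbb{Z}$. Fixing $X \in \mathcal{A}$, I would induct on the cohomological amplitude of $Y$ using the truncation triangle for the standard t-structure on $\der(\mathcal{A})$ and the five lemma applied to the associated long exact hom-sequences, reducing to the base case $Y \in \mathcal{A}$. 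In that base case $\hom_{\der(\mathcal{A})}(X, Y[n])$ is by construction the $n$-th Yoneda Ext group $\ext_{\mathcal{A}}^n(X, Y)$, so condition (1), read through Definition \ref{def:allext}, provides exactly the required isomorphism. A symmetric induction on $X$ then closes the argument.

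The implication $(2) \Rightarrow (3)$ is tautological. For $(3) \Rightarrow (1)$, fullness already gives surjectivity of the natural map $\ext_{\mathcal{A}}^n(A, B) \to \hom_{\T}(A, B[n])$ for $A, B \in \mathcal{A}$. For injectivity, I would represent a class in the kernel by a Yoneda $n$-extension, convert it by iterated applications of Lemma \ref{lem:sesdisth} into a composition of distinguished triangles in $\T$ whose total morphism $A \to B[n]$ is the one associated to the class, and then use the vanishing of that morphism in $\T$-hom to extract a splitting. Translating this splitting back through the realization functor, one obtains a Yoneda equivalence with the trivial extension. I expect this to be the main technical obstacle, precisely because passing between distinguished triangles and Yoneda equivalence classes of $n$-extensions is delicate; this is exactly the type of correspondence that Appendix A is designed to handle.

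Finally, once $\mathsf{real}$ is fully faithful, its essential image is a strictly full triangulated subcategory of $\T$ containing $\mathcal{A}$. Since the t-structure with heart $\mathcal{A}$ is bounded, Definition/Proposition \ref{def:heart} shows that every object of $\T$ admits a finite filtration whose graded pieces are shifts of objects of $\mathcal{A}$, so the essential image is all of $\T$. Combining essential surjectivity with fully faithfulness yields the desired exact equivalence.
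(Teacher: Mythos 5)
Your plan for $(1)\Rightarrow(2)$ (dévissage on the cohomological amplitude via truncation triangles and the five lemma, with base case supplied by the hypothesis on Ext groups) and your argument for essential surjectivity (using the filtration from Definition/Proposition \ref{def:heart}) are essentially the same as the paper's proof, just phrased with truncation functors in place of the explicit Harder--Narasimhan-type filtration.

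The weak point is $(3)\Rightarrow(1)$. You correctly note that fullness yields surjectivity of the comparison map $\ext^n_{\mathcal{A}}(A,B)\to\hom_{\T}(A,B[n])$ (this identification itself rests on Proposition \ref{prop:der} and item 4 of Proposition \ref{prop:yonext}, which you should cite to justify that the map induced by $\mathsf{real}$ coincides with $f_{n,A,B}$). But your proposed argument for injectivity --- represent a class in the kernel by an $n$-extension and ``use the vanishing of that morphism in $\T$-hom to extract a splitting'' --- is not a complete argument and would not close as stated. The vanishing of the composite $A\to B[n]$ in $\T$ does not directly produce a Yoneda-trivialization of the $n$-extension: the content of item 3 of Proposition \ref{prop:yonext} is precisely that this implication holds \emph{under the inductive hypothesis that $f_{n-1,A,B}$ is already an isomorphism for all $B\in\mathcal{A}$}, and the proof there is a nontrivial diagram chase involving the diagrams \eqref{eq:x1} and \eqref{eq:xy}. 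The paper's proof of $(3)\Rightarrow(1)$ is therefore genuinely inductive: $f_1$ is an isomorphism by Dyer's Theorem \ref{thm:dyer}, hence $f_2$ is injective by item 3; fullness makes $f_2$ surjective, hence $f_2$ is an isomorphism; this feeds back into item 3 to make $f_3$ injective, and so on. You gesture at Appendix A as the right toolbox but do not identify this inductive hypothesis, and without it your ``extract a splitting'' step fails.
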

\begin{proof} We start with $1. \Rightarrow 2$. Let $E,F \in \der(\mathcal{A})$ and consider $\tilde{E}=\mathsf{real}(E)$ and $\tilde{F}=\mathsf{real}(F)$. Then, by Proposition \ref{def:heart}, there exist integers $k_1 > \dots > k_m$, $j_1 > \dots j_n$ and filtrations 
\begin{gather*}
0= E_0 \to E_1 \to \dots \to E_{m-1} \to E_m = E\\
0=F_0 \to F_1 \to \dots \to F_{n-1}\to F_n=F
\end{gather*}
with $\cone(E_{i-1} \to E_i)=E^i\in \mathcal{A}[k_i]$ and $\cone(F_{h-1} \to F_h)=F^h \in \mathcal{A}[j_h]$. Since $\mathsf{real}$ is an exact functor, their images $\tilde{E}_i, \tilde{E}^i, \tilde{F}_h$ and $\tilde{F}^h$ give the same filtrations. We consider the exact hom-sequences
\[
\begin{tikzcd}[row sep=tiny,column sep=1.7em]
\dots \ar[r]& \hom(E^i,F_h) \ar[r]& \hom(E_i,F_h) \ar[r]& \hom (E_{i-1},F_h) \ar[r]& \dots\\
\dots \ar[r]& \hom(E_{i} , F_{h-1}) \ar[r]& \hom(E_{i},F_h) \ar[r]& \hom(E_{i},F^h) \ar[r]& \dots
\end{tikzcd}
\]
From these sequences, an induction on $i$ and $h$ proves that $\hom(E_{i},F_h)\cong \hom(\tilde{E}_{i} , \tilde{F}_h)$, so finally $\hom(E,F)\cong \hom(\tilde{E}, \tilde{F})$ as wanted. Notice the base case is ensured since $\T$ has all the Ext groups of $\mathcal{A}$.

The implication $2. \Rightarrow 3.$ is trivial. We deal with $3. \Rightarrow 1$. In order to do that, we use item 3 and 4 of Proposition \ref{prop:yonext}, remembering Proposition \ref{prop:der}. Let $A,B \in \mathcal{A}$. Since $f_{1,A,B}: \ext^1(A,B) \to \hom_{\T} (A,B[1])$ is an isomorphism by Dyer's Theorem \ref{thm:dyer}, we know that $f_{2,A,B}$ is injective. Moreover, since $\mathsf{real}$ is full, $f_{2,A,B}$ is surjective, thus an isomorphism. The same reasoning proves that $f_{n,A,B}$ is an isomorphism for every $n$, concluding that $\T$ has all the Ext groups of $\mathcal{A}$.

We now assume that $\mathsf{real}$ is fully faithful and prove that it is also an equivalence. Let $E \in \T$. By Proposition \ref{def:heart}, there are a sequence of integers $k_1 > \dots > k_m$ and a filtration
\[
0 = E_0 \to E_1 \to \dots \to E_{m-1} \to E_m = E
\]
such that $\cone (E_{i-1} \to E_i)=E^i \in \mathcal{A}[k_i]$. We prove by induction on $m$ that $E$ is in the essential image of $\mathsf{real}$. If $m=0$, there is nothing to prove. If $m>0$, then by induction hypothesis $E_{m-1}= \mathsf{real}(\hat{E}_{m-1})$. Obviously, $E^m= \mathsf{real}(\hat{E}^m)$ because all shifts of the heart $\mathcal{A}$ are in the essential image of $\mathsf{real}$. By the filtration, $E=\cone(E^m[-1] \to E_{m-1})$. The map associated to this cone is the image of a unique map $f:\hat{E}^m[-1] \to \hat{E}_{m-1}$ in $\der(\mathcal{A})$ because $\mathsf{real}$ is fully faithful.  We consider its cone $\cone(f)$. Since $\mathsf{real}$ is exact, we obtain an isomorphism $\mathsf{real}(\cone(f)) \cong E$. 
\end{proof}
\begin{rem}
As a corollary, it is immediately proven that not all hearts in a derived category have all the Ext groups. Indeed, in $\der(\mathbb{P}^1)$ one can show that $\mathcal{A}=\lbrace \mathcal{O}_{\mathbb{P}^1}^{\oplus a_0}[2] \oplus \mathcal{O}_{\mathbb{P}^1}(1)^{\oplus a_1} \mid a_0 , a_1\ge 0 \rbrace$ gives a heart (this can be done by applying Theorem \ref{thm:decheart}). As highlighted in \cite[Exercise 5.3]{msch}, $\der(\mathcal{A})\cong \der (\operatorname{pt})^{\oplus 2}$ is not equivalent to $\der(\mathbb{P}^1)$, so $\der(\mathbb{P}^1)$ cannot have all the Ext groups of $\mathcal{A}$. 

With a different approach, notice that $\mathcal{A}\owns \mathcal{O}_{\mathbb{P}^1}[2] \to \mathcal{O}_{\mathbb{P}^1}(1)[2]\in \mathcal{A}[2]$ does not factor through an object in $\mathcal{A}[1]$, and therefore Corollary \ref{cor:extgroups} proves that $\der(\mathbb{P}^1)$ does not have all the Ext groups of $\mathcal{A}$. 
\end{rem}

\begin{rem}
Let $\mathbb{K}$ be a field and consider a realized $\mathbb{K}$-linear triangulated category $\T$ with a full strong exceptional sequence $\langle E_1 , \dots , E_n\rangle$. Then we can consider the heart $\mathcal{A}$ on $\T$ obtained according to Theorem \ref{thm:decheart} and Example \ref{es:exc}, giving rise to a realization functor $\der(\mathcal{A}) \to \T$. One would like to prove that such functor is in fact an equivalence, so that \cite[Corollary 1.9]{orlov} can be applied to ensure the generalization of Bondal's result \cite[Theorem 6.2]{bondal}. However, when $n>2$, it is not said that $\T$ has all the Ext groups of $\mathcal{A}$; for instance, if $n=3$,
 \[
\mathcal{A} \owns E_1[2] \overset{f}{\longrightarrow} E_3[2] \in \mathcal{A}[2]\]
does not necessarily factor through $\mathcal{A}[1]$. In general, we would have $f \notin \ext^2_{\mathcal{A}}(E_1[2],E_3)$ by Proposition \ref{prop:yonext}, item 1. For example, consider the quiver obtained by the following vertices and arrows:
\begin{center}
\begin{tikzpicture}[font=\small]
\draw plot [mark=*, mark size=1.1] coordinates{(0,0)};
\node[below] at (0,0) {$1$};
\draw plot [mark=*, mark size=1.1] coordinates{(2,0)};
\node[below] at (2,0) {$2$};
\draw plot [mark=*, mark size=1.1] coordinates{(4,0)};
\node[below] at (4,0) {$3$};
\draw[->] (0,0)--(2,0);
\draw [->,bend left](0,0) to node[pos=0.5,above]{$f$} (4,0);
\end{tikzpicture}
\end{center}

In order to resolve this issue, we recall what already discussed in Remark \ref{rem:torsionpair}. If the length of the exceptional sequence is 2, the heart obtained by Theorem \ref{thm:decheart} is a tilt of $\operatorname{mod-}A$, where $A=\operatorname{End}(\bigoplus_{i=1}^2 E_i)$. As we will see, the same idea can be used to prove the general case.
\end{rem}

\begin{thm}\label{thm:genbon}
Let $\mathbb{K}$ be a field and let $\T$ be a realized $\mathbb{K}$-linear triangulated category with a full strong exceptional sequence $\langle E_1 , \dots , E_n \rangle$ such that $\bigoplus_i \hom(X,Y[i])$ is a finite-dimensional vector space for any $X,Y \in \T$. Then $\T \cong \der(\operatorname{mod-}A)$, where $A= \operatorname{End}(\bigoplus_{i=1}^n E_i)$.
\end{thm}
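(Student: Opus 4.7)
The plan is to proceed by induction on $n$. The base case $n=1$ is immediate, since $\T\cong\der(\mathbb{K})=\der(\operatorname{mod-}\operatorname{End}(E_1))$.

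For the inductive step, I would split $\T=\langle\T_{\le n-1},\T_n\rangle$ with $\T_{\le n-1}=\langle E_1,\dots,E_{n-1}\rangle$ and $\T_n=\langle E_n\rangle$. The subcategory $\T_{\le n-1}$, as a full triangulated subcategory of the realized category $\T$, is realized itself, and carries the full strong exceptional sequence $\langle E_1,\dots,E_{n-1}\rangle$ with the same finite-dimensionality hypothesis. By induction, $\T_{\le n-1}\cong\der(\operatorname{mod-}B)$ with $B=\operatorname{End}(\bigoplus_{i<n}E_i)$, and each $E_i$ ($i<n$) corresponds to the $i$-th projective. Transport the standard t-structure to obtain a heart $\mathcal{A}_{n-1}$ on $\T_{\le n-1}$ containing every $E_i$ ($i<n$), and equip $\T_n\cong\der(\mathbb{K})$ with the heart $\mathcal{A}_n=\{E_n^{\oplus a}:a\in\mathbb{N}\}$. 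I would check the compatibility $\hom(\T_{\le n-1}^{\le 0},\T_n^{\ge 1})=0$ as follows: an object of $\T_n^{\ge 1}$ is a successive extension of shifts $E_n[-l]$ with $l\ge 1$; any $M\in\mathcal{A}_{n-1}$ admits a finite projective resolution by sums of $E_i$ ($i<n$), since $B$ has finite global dimension (the finite-dimensional graded Hom hypothesis forces this); and the strong exceptional vanishing $\hom_\T(E_i,E_n[-l])=0$ then propagates through iterated long exact sequences to give $\hom_\T(M,E_n[-l])=0$. Theorem \ref{thm:decheart} thus produces a bounded heart $\mathcal{A}=\mathcal{A}_n*\mathcal{A}_{n-1}[1]$ on $\T$ containing $E_n$ and every $E_i[1]$ for $i<n$.

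Since $\T$ is realized, there is an exact realization functor $\mathsf{real}\colon\der(\mathcal{A})\to\T$. By Proposition \ref{prop:realf} it is enough to show $\T$ has all Ext groups of $\mathcal{A}$: granting this, $\mathsf{real}$ is an equivalence, so $\T\cong\der(\mathcal{A})$ is algebraic, and Keller--Orlov \cite[Corollary 1.9]{orlov} applied to the now-algebraic category $\T$ with its full strong exceptional sequence delivers $\T\cong\der(\operatorname{mod-}A)$ for $A=\operatorname{End}(\bigoplus_i E_i)$. The main obstacle is this Ext-matching step. The inductive hypothesis already ensures $\T_{\le n-1}$ has all Ext groups of $\mathcal{A}_{n-1}$; what must be propagated is the matching across the filtration $\mathcal{A}=\mathcal{A}_n*\mathcal{A}_{n-1}[1]$. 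The cross Ext groups involving $E_n$ and $M[1]$ for $M\in\mathcal{A}_{n-1}$ should reduce, via the projective filtration of $M$ in $\operatorname{mod-}B$ and long exact sequences, to Hom groups $\hom_\T(E_n,E_i[\ast])$ and $\hom_\T(E_i,E_n[\ast])$, which are controlled by strong exceptionality together with the semiorthogonality $\hom_\T(\T_n,\T_{\le n-1})=0$. Surjectivity of $f_{n,A,B}$ for arbitrary $A,B\in\mathcal{A}$ would then follow by a five-lemma bookkeeping along the filtration $\mathcal{A}=\mathcal{A}_n*\mathcal{A}_{n-1}[1]$.
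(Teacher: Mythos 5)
Your overall architecture is the same as the paper's: induct on $n$, split $\T = \langle \T_{\le n-1},\T_n\rangle$ with $\T_{\le n-1}$ already a derived category by induction, equip $\T_{\le n-1}$ with the heart coming from $\operatorname{mod-}B$ (this ``coarse'' choice is exactly what sidesteps the obstruction discussed in the paper's remark before Theorem \ref{thm:genbon}, where iterating the two-term construction would force $E_1$ to sit in $\mathcal{A}_1[n-1]$), apply Theorem \ref{thm:decheart}, and then use the realized hypothesis together with Proposition \ref{prop:realf} and Keller--Orlov. Starting the induction at $n=1$ rather than $n=2$ is fine (with the heart $\{E_1^{\oplus a}\}$ of dimension $0$, Corollary \ref{cor:extgroups} and Proposition \ref{prop:realf} dispatch it cleanly, without needing Hubery), and your compatibility argument via finite projective resolutions of $M \in \operatorname{mod-}B$ is a valid alternative to the paper's use of the socle-type filtration $F^k$ and the sequences $0\to F^{k-1}P_k\to P_k\to S_k\to 0$; the two are essentially interchangeable.

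The genuine gap is the Ext-matching step, which is the technical heart of the theorem and which you leave at the level of ``five-lemma bookkeeping.'' What actually makes that bookkeeping close is not stated: one must show, by a \emph{double} induction (on the cohomological degree $m$ and on the length of the filtration of $X$), that $\ext^m_{\mathcal{A}}(\varphi(X)[1],E_n)\to\hom_\T(\varphi(X)[1],E_n[m])$ is an isomorphism, and for this the five lemma needs both ends of each long exact sequence under control. The crucial input you do not identify is that for $m>1$ and $k<n$ one has $\ext^m_{\mathcal{A}}(E_k[1],E_n)\subseteq\hom_\T(E_k[1],E_n[m])=0$ (Proposition \ref{prop:yonext}, item 3), i.e. the Yoneda Ext groups at the projective ends vanish, not merely the triangulated Hom's. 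Without that vanishing the comparison maps in your long exact sequences are not a priori isomorphisms at the edges, and the five lemma has nothing to bite on. The paper's proof spends most of its effort precisely here, first proving $\ext^m(\varphi(S_k)[1],E_n)\cong\hom(\varphi(S_k)[1],E_n[m])$ via diagram \eqref{eq:indsimple}, and then checking that the connecting map $g_k$ in the second five-term ladder is injective by verifying $\ext^{m+1}(E_k[1],E_n)=0$. None of this is reconstructible from ``the cross Ext groups should reduce ... to Hom groups controlled by strong exceptionality,'' so while the strategy is correct and matches the paper, the argument as written does not establish the key hypothesis of Proposition \ref{prop:realf}.
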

\begin{proof} 
We will prove the statement by induction on $n$, the length of the exceptional sequence. The base case $n=2$ is already taken care of by Corollary \ref{cor:uniex}.

If $n>2$, we write $\T= \langle \tilde{\T} , E_n \rangle$. By induction hypothesis, there exists an exact equivalence $\varphi: \der(\operatorname{mod-}\tilde{A})\to \tilde{\T}$ with $\tilde{A}=\operatorname{End}(\bigoplus_{i=1}^{n-1} E_i)$. We divide the proof in two parts:
\begin{enumerate}
\item The t-structures associated to $\varphi(\operatorname{mod-}\tilde{A})$ and $E_n$ are compatible. By Theorem \ref{thm:decheart}, we obtain a heart $\mathcal{A}$ on $\T$.
\item $\T$ has all the Ext groups of $\mathcal{A}$.
\end{enumerate}
Once both items are ensured, Proposition \ref{prop:realf} can be applied, proving that $\T\cong \der(\mathcal{A})$, and an application of \cite[Corollary 1.9]{orlov} will complete the proof.

From  \eqref{eq:filt}, every object $X \in \operatorname{mod-}\tilde{A}$ has an associated filtration
\[
0=F^0 X\hookrightarrow F^1 X \hookrightarrow \dots \hookrightarrow F^{n-2} X \hookrightarrow F^{n-1} X=X
\]
where $F^{k}X/F^{k-1}X$ is a direct sum of copies of $S_k$. Moreover, for each $P_k$ there is a short exact sequence $0 \to F^{k-1} P_k \to P_k \to S_k \to 0$ by \eqref{eq:sesproj}. In particular, $S_1 =P_1$.

Let us deal with 1. In order to prove it, it suffices to show that $\hom(\varphi(X),E_n[m])=0$ for every $m\le -1$ and $X \in \operatorname{mod-}\tilde{A}$. This can be done by induction on $k$, requiring $F^k X=X$. If $k=1$, $F^1 X $ is in fact a direct sum of copies of $P_1=\varphi^{-1}(E_1)$, so the claim holds.

If $k>1$, notice that the short exact sequence $0\to F^{k-1} P_k \to P_k \to S_k\to 0$ is associated to a distinguished triangle in $\T$, so it gives rise to the hom-sequence
\[
\hom (\varphi(F^{k-1} P_k)[1] , E_n[m])\to \hom (\varphi(S_k) , E_n[m]) \to \hom(E_k , E_n [m]).
\]
By induction, $\hom(\varphi (F^{k-1} P_k) [1] , E_n [m])=0$, while $\hom(E_k , E_n[m])=0$ by hypothesis. Therefore, $\hom(\varphi(S_k) , E_n[m])=0$. We now consider $X=F^k X$ and the distinguished triangle \[F^{k-1} X \to X\to X/F^{k-1} X \to F^{k-1} X[1]\] obtained by the filtration. From the associated hom-sequence, $\hom(\varphi(X), E_n[m])=0$ since the same holds for $F^{k-1} X$ and $X/F^{k-1} X$, the last one being a direct sum of copies of $S_k$.

It remains to prove item 2. According to Corollary \ref{cor:allext}, we will prove by induction on $m$ that $\hom(\varphi(X),E_n[m])\cong \ext_{\mathcal{A}}^m(\varphi(X),E_n)$ with $\varphi(X) \in \varphi(\operatorname{mod-}\tilde{A})[1] \subset \mathcal{A}$. The cases $m=0,1$ are true since $\mathcal{A}$ is a heart. Let $m>1$. By Proposition \ref{prop:yonext}, it holds that $\ext^m(E_k[1] , E_n)\subset \hom(E_k[1],E_n[m])=0$, and therefore $\ext^m(E_k[1] , E_n)=0$. 
Let us consider the distinguished triangle $F^{k-1} P_k \to P_k \to S_k \to F^{k-1} P_k [1]$.
Applying $\hom (\varphi( -) , E_n[m])$, we get the following commutative diagram
\begin{equation}\label{eq:indsimple}
\begin{tikzcd}[column sep=small]
\ext^{m-1}(E_k[1] , E_n) \ar[r]\ar[d,"\cong"]&\ext^{m-1} (\varphi(F^{k-1} P_k)[1],E_n) \ar[r]\ar[d,"\cong"] & \ext^m (\varphi(S_k)[1] , E_n)\ar[r]\ar[d] & 0\\
\hom(E_k[2], E_n[m]) \ar[r]& \hom(\varphi(F^{k-1} P_k)[2], E_n[m]) \ar[r]& \hom(\varphi(S_k)[1], E_n[m])\ar[r]&0
\end{tikzcd}
\end{equation}
proving that $\ext^m (\varphi(S_k)[1] , E_n) \cong \hom(\varphi(S_k)[1] , E_n[m])$ for every $k$ (use, for instance, the five lemma). 

Now, we proceed by induction on the length of the filtration. If $X=F^1 X$, there is nothing to prove since $F^1 X$ is a sum of copies of $S_1 =E_1$, and therefore $\hom(\varphi(F^1 X)[1] , E_n[m])=0$ since $m>1$. If $X=F^k X$, we consider the short exact sequence
$0 \to F^{k-1} X \to X \to X/F^{k-1} X \to 0$. Then we get the following diagram:
\[
\begin{tikzcd}[row sep=small]
\ext^{m-1}(\varphi(F^{k-1} X)[1] , E_n ) \ar[d]\ar[r,"\cong"]& \hom(\varphi(F^{k-1} X)[2] , E_n [m]) \ar[d]\\
\ext^m(\varphi(X/F^{k-1}X)[1] , E_n)\ar[d]\ar[r,"\cong"]
& \hom(\varphi(X/F^{k-1}X)[1] , E_n[m])\ar[d]\\
\ext^m(\varphi(X)[1] , E_n) \ar[d]\ar[r,"f_k"]& \hom(\varphi(X)[1] , E_n[m]) \ar[d]\\
\ext^m (\varphi(F^{k-1} X)[1] , E_n) \ar[d]\ar[r,"\cong"] & \hom (\varphi(F^{k-1} X)[1] , E_n [m]) \ar[d]\\
\ext^{m+1}(\varphi(X/F^{k-1} X)[1] , E_n)\ar[r,"g_k"]&\hom(\varphi(X/F^{k-1} X) , E_n[m]).\\
\end{tikzcd}
\]
To show that $f_k$ is an isomorphism, it suffices to apply the five lemma whenever $g_k$ is a monomorphism. More strongly, we claim that $g_k$ is an isomorphism. The idea is exactly the one seen above with the diagram \eqref{eq:indsimple}. In order to prove that \[\ext^{m+1} (E_k[1] , E_n) \subset \hom(E_k[1], E_n[m+1])=0,\] 
we will check that $\ext^{m} (E_k[1] , Y) \cong \hom(E_k[1], Y[m])$ for any $Y \in \mathcal{A}$, and conclude by item 3 of Proposition \ref{prop:yonext}. This is in fact true. Indeed, notice that \[
\ext^m (E_k[1] , \varphi(X)[1])=\hom (E_k[1] , \varphi(X)[m+1])=0\] for any $X \in \operatorname{mod-}\tilde{A}$ because $E_k$ is projective in $\varphi(\operatorname{mod-}\tilde{A})$. Furthermore, as remarked before \eqref{eq:indsimple}, $\ext^m(E_k[1] , E_n)=\hom(E_k[1],E_n[m])=0$. We conclude that \[
\ext^m(E_k[1] , Y)=\hom(E_k[1] , Y[m])=0\] 
since any $Y\in \mathcal{A}$ is the extension of a direct sum of copies of $E_n$ and an object $\varphi(X)[1] \in \varphi(\operatorname{mod-}\tilde{A})$.
\end{proof}

\appendix
\section{Yoneda extensions in a triangulated category}
A necessary remark to prove Hubery's main result is that, for a heart $\mathcal{A}$ in a triangulated category $\T$, if $\dim_{\T}(\mathcal{A})\le 1$, then $\dim_{\der(\mathcal{A})}(\mathcal{A})\le 1$ (see \cite[Section 3]{hubery}). This appendix aims to generalize this observation, providing results on Yoneda extensions in any triangulated category. 

First of all, we want to recall a theorem by Dyer, as it will give the desired generality for Proposition \ref{prop:yonext}. For this reason, let us give the definition of exact category according to Quillen \cite{quillen}.
\begin{defn}
An \emph{exact category} $\mathcal{A}$ is a full extension closed additive subcategory of an abelian category $\mathcal{B}$. A \emph{conflation} (or short exact sequence) is given by a short exact sequence in $\mathcal{B}$ contained in $\mathcal{A}$.
\end{defn}

\begin{thm}[Dyer]\emph{\cite{dyer}}.\label{thm:dyer} Let $\mathcal{A}$ be a full extension closed additive subcategory of a triangulated category $\T$ such that $\hom(A,B[-1])=0$ for any $A,B \in \mathcal{A}$. 

Then $\mathcal{A}$ has a natural exact structure, given by defining $0 \to A \to B \to C\to 0$ a conflation if $A \to B \to C \to A[1]$ is a distinguished triangle in $\T$ for some $C \to A[1]$. This association gives rise to a natural isomorphism $\ext^1_\mathcal{A}(A,B) \cong \hom_{\mathcal{T}}(A,B[1])$ for all $A,B \in \mathcal{A}$.
\end{thm}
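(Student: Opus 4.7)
The plan is to verify Quillen's axioms for the class of candidate conflations arising from distinguished triangles with terms in $\mathcal{A}$, then invoke the Gabriel--Quillen embedding to realize $\mathcal{A}$ inside an abelian category (as required by the paper's definition of exact category), and finally construct the Yoneda Ext isomorphism directly from the connecting morphism.

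First I would check that each candidate $A \to B \to C$ coming from a distinguished triangle $A \to B \to C \to A[1]$ with all three objects in $\mathcal{A}$ is genuinely a kernel--cokernel pair in $\mathcal{A}$: any $f: X \to A$ with $X \in \mathcal{A}$ killed in $B$ factors through the rotated map $C[-1] \to A$, and the hypothesis $\hom_\T(X, C[-1])=0$ forces $f=0$, so $A \to B$ is a monomorphism; dually $B \to C$ is an epimorphism. Closure under isomorphism and identity conflations are immediate. Composition of admissible monics follows from the octahedral axiom: given admissible monics $A \to B$ and $B \to C$, the cone of the composite fits in a triangle whose other terms are the intermediate cones (both in $\mathcal{A}$), so by extension-closedness it lies in $\mathcal{A}$. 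Pushouts along an admissible monic $A \to B$ by a map $A \to D$ are constructed by setting $E := \cone(A \to B \oplus D)$, which is in $\mathcal{A}$ by extension-closedness; universality is verified by $\mathrm{TR3}$. The dual axioms hold symmetrically. Finally, applying the Gabriel--Quillen embedding gives the abelian envelope in which $\mathcal{A}$ sits as a full extension-closed subcategory.

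Next, I would define $\Phi: \hom_\T(A, B[1]) \to \ext^1_\mathcal{A}(A, B)$ by sending $f: A \to B[1]$ to the class of the conflation $B \to \cone(f)[-1] \to A$ coming from the rotated distinguished triangle, where $\cone(f)[-1] \in \mathcal{A}$ by extension-closedness. Surjectivity is tautological, since every conflation is by definition given by a distinguished triangle. For injectivity, two triangles sharing the same connecting morphism $f$ admit, by $\mathrm{TR3}$, a mediating map between the middle terms which is an isomorphism by the five-lemma for triangulated categories, and which commutes with the inclusions of $B$ and projections onto $A$; this is precisely an equivalence of extensions. Conversely, equivalent extensions yield the same connecting morphism, read off from the induced morphism of triangles. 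Additivity (Baer sum on the target corresponding to addition in $\hom_\T(A, B[1])$) is checked by an explicit octahedral diagram realizing the Baer sum of two extensions as an iterated cone, whence $\Phi$ is a group isomorphism natural in $A$ and $B$.

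The main obstacle is the pushout axiom: although the candidate $E := \cone(A \to B \oplus D)$ sits in a distinguished triangle and lies in $\mathcal{A}$, one must establish that the resulting square is a genuine pushout \emph{in} $\mathcal{A}$ and that $D \to E$ is itself an admissible monic. Universality, i.e.\ the unique factorization of compatible maps from $B$ and $D$ into a test object through $E$, comes from $\mathrm{TR3}$; the admissible-monic property requires a further octahedron to exhibit a cone for $D \to E$ inside $\mathcal{A}$. Throughout, the hypothesis $\hom_\T(A, B[-1]) = 0$ on $\mathcal{A}$ is used to guarantee that the mediating morphisms produced by $\mathrm{TR3}$ are \emph{uniquely} determined, which is what makes the resulting exact structure well-defined and the Ext isomorphism canonical.
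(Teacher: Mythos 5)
The paper gives no proof of this theorem: it is imported from Dyer's notes and used as a black box, so there is no argument of the author's to compare yours against. Judged on its own, your proof is the standard one and is essentially sound: verifying Quillen's axioms for the triangle-induced conflations, invoking Gabriel--Quillen to match the paper's definition of exact category, and matching connecting morphisms $A \to B[1]$ with equivalence classes of conflations are exactly the right ingredients, and you correctly isolate the role of the hypothesis $\hom(A,B[-1])=0$ as forcing uniqueness of every mediating morphism.

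One justification is wrong as written, though you partially repair it in your closing paragraph: $E=\cone(A\to B\oplus D)$ does \emph{not} lie in $\mathcal{A}$ ``by extension-closedness'' applied to its defining triangle $A\to B\oplus D\to E\to A[1]$, since the outer term $A[1]$ need not be in $\mathcal{A}$. You must first apply the octahedral axiom to the composite $A\to B\oplus D\to B$ to obtain the rotated triangle $D\to E\to C\to D[1]$, where $C=\cone(A\to B)\in\mathcal{A}$; this single octahedron simultaneously shows $E\in\mathcal{A}$ and exhibits $D\to E$ as an admissible monic --- it is not needed only for the latter, as your last paragraph suggests. Two smaller points to tighten: for the kernel property you only verify that $A\to B$ is monic, whereas the existence of the factorization of a map killed by $B\to C$ comes from the long exact sequence (with the same vanishing $\hom(-,C[-1])=0$ giving uniqueness); and in the converse direction of injectivity, the map $b\colon B\to B$ supplied by TR3 must still be identified with $\id_B$, which follows because the admissible monic $B\to Y$ is a monomorphism. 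Neither is a fatal gap.
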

\begin{rem}
A heart $\mathcal{A}$ in a triangulated category $\T$ satisfies the requirements of Theorem \ref{thm:dyer} thanks to Lemma \ref{lem:sesdisth}. 
\end{rem}
\begin{defn}[Yoneda extensions]\label{defn:extn}
Let $\mathcal{A}$ be an abelian category. The elements of the group $\ext^n(A,B)\cong \hom_{\der(\mathcal{A})}(A,B[n])$ are \emph{$n$-extensions} for $n>0$, i.e. exact sequences
\[
\mathbf{X}: 0 \to B \to X_1 \to \dots \to X_n \to A \to 0
\]
under the equivalence relation generated by identifying two exact sequences $\mathbf{X}$, $\mathbf{Y}$ if there is a family of morphisms $\psi= \lbrace \psi_1 , \dots , \psi_n \rbrace$ satisfying the following commutative diagram
\[
\begin{tikzcd}
0\ar[r] &B\ar[d,"\id"] \ar[r]&X_1\ar[d,"\psi_1"] \ar[r]& \dots \ar[r]& X_n \ar[d,"\psi_n"] \ar[r]&A\ar[r]\ar[d,"\id"] & 0\\
0\ar[r] &B \ar[r]&Y_1 \ar[r]& \dots \ar[r]& Y_n  \ar[r]&A\ar[r]& 0
\end{tikzcd}
\]
(cf. \cite[Theorem III.5.5]{gelfmani}). For $n=0$, $\ext^0(A,B)\cong \hom_{\der(\mathcal{A})}(A,B)\cong \hom_{\mathcal{A}}(A,B)$.

The \emph{Yoneda product} is given by maps $Y^{n,m}_{A,B,C}:\ext^n (A,B) \times \ext^m(B,C) \to \ext^{n+m}(A,C)$ for any $n,m\ge 0$ and any $A,B,C \in \mathcal{A}$. For $n,m \ge 1$, the Yoneda product is the map

\[
\begin{tikzcd}
\big(\mathbf{X}: 0 \to B \to X_1 \to \dots \to X_n \to A \to 0 \ , \ \mathbf{Y}: 0 \to C \to Y_1 \to \dots \to Y_m \to B \to 0 \big)\ar[d,mapsto]\\
\mathbf{Y}\cdot \mathbf{X}: 0 \to C \to Y_1 \to \dots \to Y_m \to X_1 \to \dots \to X_n \to A \to 0.
\end{tikzcd}
\]
If $n=m=0$, the product is simply the composition of maps. 
The case $n> 0$ and $m=0$ requires a more sophisticated definition.
Let $X_1 \in \ext^1(K,B)$ and $g:B \to C$. Then $g \cdot X_1$ is described by the following commutative diagram
\begin{equation}\label{eq:Xg}
\begin{tikzcd}
0 \ar[r] &B \ar[r]\ar[d,"g"]& X_1 \ar[d]\ar[r] &K \ar[r]\ar[d,"\id"] &0\\
0 \ar[r] & C \ar[r]& g \cdot X_1 \ar[r] &K \ar[r]&0
\end{tikzcd}
\end{equation}
where $g\cdot X_1$ is the pushout of $g$ and $B \to X_1$. Now, considering an $n$-extension 
\[
\mathbf{X}: 0 \to B \to X_1 \to X_2 \to \dots \to X_n \to A \to 0
\]
and $g:B \to C$, the Yoneda product is given by substituting $0 \to B \to X_1$ with $0 \to C \to g \cdot X_1$:
\[
g \cdot \mathbf{X} : 0 \to C \to g \cdot X_1 \to X_2 \to \dots \to X_n \to A \to 0.
\]
Dually, one can describe the case $n=0$ and $m>0$.
The Yoneda product so defined behaves according to the composition of maps (up to shift) \[\hom_{\der(\mathcal{A})} (A,B[n]) \times \hom_{\der(\mathcal{A})} (B,C[m]) \to \hom_{\der(\mathcal{A})} (A,C[n+m]).\] 

The structure of abelian group of $\hom_{\der(\mathcal{A})}(A,B[n])$ can be considered on $\ext^n(A,B)$ via the \emph{Baer sum}, described as follows. Let $\mathbf{X},\mathbf{Y}\in \ext^n(A,B)$. Consider the direct sum of the long exact sequences
\[
\mathbf{X} \oplus \mathbf{Y}:0 \to B\oplus B \to X_1 \oplus Y_1 \to \dots \to X_n \oplus Y_n \to A \oplus A \to 0,
\]
the diagonal map $\Delta_A=\left( \begin{smallmatrix} \id \\ \id \end{smallmatrix}\right) : A  \to A \oplus A$ and the codiagonal map $\nabla_B = (\id \ \id) : B\oplus B \to B$. Then the Baer sum is given by $\mathbf{X} + \mathbf{Y}:=\nabla_B \cdot (\mathbf{X}\oplus \mathbf{Y})\cdot \Delta_A$.

The \emph{(absolute) homological dimension of $\mathcal{A}$}, denoted by $\dim \mathcal{A}$, is the greatest integer $n$ such that $\ext^n(A,B) \ne 0$ for some $A,B \in \mathcal{A}$.
\end{defn}
\begin{rem}
Last definition can be generalized to any exact category $\mathcal{A}$, where an $n$-extension is a sequence
\[
\begin{tikzcd}
0 \ar[r]& B \ar[r,"\xi_0"]& X_1 \ar[r,"\xi_1"] & X_2 \ar[r,"\xi_2"]&\dots \ar[r,"\xi_{n-1}"]& X_n \ar[r,"\xi_n"]& A \ar[r]& 0
\end{tikzcd}
\]
such that, for $i=1, \dots , n-1$, $\xi_i$ factor through an object $C_i \in \mathcal{A}$ and
\[
0 \to B \to X_1 \to C_1 \to 0, \quad 0 \to C_1 \to X_2 \to C_2 \to 0, \quad \dots, \quad 0 \to C_{n-1} \to X_n \to A \to 0
\]
are conflations. In particular, $C_i = \im \xi_i=\ker \xi_{i+1}$.
\end{rem}

\begin{prop}\label{prop:yonext}\emph{\cite[Lemma 2.1]{chenhanzhou}.}
Let $\mathcal{A}$ be a heart of a triangulated category $\mathcal{T}$. More generally, let $\mathcal{A}$ be an exact subcategory of $\T$ as in Dyer's Theorem \ref{thm:dyer}. Then there is a well-defined map $f_{n,A,B}:\ext^n(A,B) \to \hom_{\T}(A,B[n])$ for any $A,B \in \mathcal{A}$ and $n\ge 0$. The following facts are true.
\begin{enumerate}
\item  The image of $f_{n,A,B}$ is given by all the maps $A \to B[n]$ factoring as 
\[A \to C_{n-1} [1] \to \dots \to C_1[n-1]\to B[n]\]
for some $C_i \in \mathcal{A}$, $i\in\lbrace 1, \dots , n-1 \rbrace$. 
\item The Yoneda product is sent to composition as expected: therefore, $f_{n,-,-}$ is a natural transformation and $f_{n,A,B}$ is a group homomorphism with respect to the Baer sum on $\ext^n(A,B)$.
\item If $f_{n-1,A,B}$ is an isomorphism for any $B\in \mathcal{A}$, then $f_{n,A,B}$ is injective. 
\item Let $g_{n,A,B}: \ext^n(A,B) \to \hom_{\T}(A,B[n])$ be a map for any $n\ge 0$ and $A,B \in \mathcal{A}$. If $g_{1,A,B}$ is the natural isomorphism of Theorem \ref{thm:dyer} and the Yoneda product is sent to composition, then $g_{n,A,B}=f_{n,A,B}$.
\end{enumerate}

\end{prop}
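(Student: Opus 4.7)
The plan is to construct $f_{n,A,B}$ explicitly and then verify the four properties in turn. For $n=0$, set $f_0$ to be the natural inclusion $\hom_{\mathcal{A}}(A,B) \hookrightarrow \hom_{\T}(A,B)$. For $n\ge 1$ and a representative
$$\mathbf{X}: 0 \to B \to X_1 \to \dots \to X_n \to A \to 0,$$
decompose it into conflations $0\to C_{i-1}\to X_i \to C_i \to 0$ with $C_0=B$, $C_n=A$, and intermediate $C_i$ the image of $X_i \to X_{i+1}$. By Dyer's Theorem \ref{thm:dyer} (with Lemma \ref{lem:sesdisth} in the heart case) each conflation fits into a distinguished triangle, yielding connecting maps $\xi_i: C_i \to C_{i-1}[1]$; set $f_n(\mathbf{X}) := \xi_1[n-1]\circ \xi_2[n-2]\circ \dots \circ \xi_n$. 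Well-definedness reduces to invariance under a generator $\psi:\mathbf{X}\to \mathbf{Y}$ of the equivalence relation: such a $\psi$ induces morphisms between conflations fixing the endpoints, so iterative use of (TR3) produces compatible maps $C_i \to C_i'$ and forces the two compositions $A\to B[n]$ to agree.

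For item 1 the inclusion ``$\subseteq$'' is immediate from the construction. For the reverse, start from a factorization $A\to C_{n-1}[1]\to \dots \to C_1[n-1]\to B[n]$ with $C_i\in \mathcal{A}$, complete each map $C_i \to C_{i-1}[1]$ to a distinguished triangle $C_{i-1}\to X_i \to C_i \to C_{i-1}[1]$, and use extension-closedness of $\mathcal{A}$ to get $X_i\in \mathcal{A}$; gluing the resulting conflations produces an $n$-extension whose image under $f_n$ is the prescribed composition. For item 2, the concatenation $\mathbf{Y}\cdot \mathbf{X}$ decomposes into the conflations of $\mathbf{X}$ followed by those of $\mathbf{Y}$, so directly $f_{n+m}(\mathbf{Y}\cdot\mathbf{X}) = f_m(\mathbf{Y})[n]\circ f_n(\mathbf{X})$. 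The case of multiplication by a morphism $g:B\to C$ via the pushout \eqref{eq:Xg} is handled by applying (TR3) to the pushout square, which identifies the new connecting map as $g[1]\circ \xi_1$; the general mixed product reduces to a combination of these. The Baer-sum statement then follows from additivity of $\hom_{\T}(A,-[n])$ via $\mathbf{X}+\mathbf{Y}=\nabla_B\cdot (\mathbf{X}\oplus \mathbf{Y})\cdot \Delta_A$, and naturality of $f_{n,-,-}$ is a special case of compatibility with Yoneda multiplication by morphisms.

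For item 3, write $\mathbf{X}=\mathbf{X}_1\cdot \mathbf{Y}$ with $\mathbf{X}_1\in \ext^1(C_1,B)$ the first conflation and $\mathbf{Y}\in \ext^{n-1}(A,C_1)$ the rest, so that $f_n(\mathbf{X})=f_1(\mathbf{X}_1)[n-1]\circ f_{n-1}(\mathbf{Y})$. If this vanishes, the long exact sequence attached to $B\to X_1 \to C_1 \to B[1]$ lifts $f_{n-1}(\mathbf{Y})$ to some $h\in \hom_{\T}(A,X_1[n-1])$; since $f_{n-1}$ is an isomorphism, $h=f_{n-1}(\mathbf{Z})$ for some $\mathbf{Z}\in \ext^{n-1}(A,X_1)$, and item 2 applied to the quotient $\iota: X_1 \to C_1$ yields $\mathbf{Y}=\iota\cdot \mathbf{Z}$. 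Thus $\mathbf{X}=\mathbf{X}_1\cdot \iota \cdot \mathbf{Z}$, and $\mathbf{X}_1\cdot \iota\in \ext^1(X_1,B)$ is the pullback of $\mathbf{X}_1$ along its own quotient, which splits via the section $\id_{X_1}$, so $\mathbf{X}=0$. For item 4 induct on $n$: the decomposition $\mathbf{X}=\mathbf{X}_1\cdot \mathbf{Y}$ and compatibility of Yoneda product with composition force $g_n(\mathbf{X})=g_1(\mathbf{X}_1)[n-1]\circ g_{n-1}(\mathbf{Y})=f_n(\mathbf{X})$, once one notes that the base case $n=0$ is pinned down by Yoneda compatibility (taking products against $g_1$, which is assumed to coincide with the Dyer isomorphism).

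The main obstacle is item 2: the Yoneda product splits into several distinct subcases (concatenation, one-sided multiplication by a morphism realized by pushout or pullback, and the Baer sum), each needing a careful appeal to (TR3) and occasionally the octahedral axiom to match the construction in $\T$. Since well-definedness of $f_n$ and the inductive arguments of items 3 and 4 both rely crucially on these functoriality properties, most of the technical effort in the proof will be concentrated in this compatibility check.
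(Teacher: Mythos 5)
Your proposal is correct and follows essentially the same route as the paper: the same construction of $f_{n,A,B}$ from the conflation decomposition with well-definedness checked via TR3, the same realization of factorizations for item 1, the same reduction of item 2 to concatenation plus the pushout/pullback cases, and the same inductive splitting $\mathbf{X}=\mathbf{X}_1\cdot\mathbf{X}_2$ for item 4. Your item 3 is only a mild repackaging of the paper's argument — lifting $f_{n-1}(\mathbf{Y})$ along the triangle of the first conflation, using surjectivity of $f_{n-1,A,X_1}$ and injectivity of $f_{n-1,A,C_1}$, and concluding from the split class $\mathbf{X}_1\cdot\iota$ via associativity of the Yoneda product, where the paper instead writes out the corresponding diagrams explicitly.
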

\begin{proof}
For $n=0$, $f_{0,A,B}: \hom_{\mathcal{A}} (A,B) \to \hom_{\mathcal{T}}(A,B)$ is an isomorphism since $\mathcal{A}$ is a full subcategory of $\T$. Let $n>0$ and consider $\mathbf{X}$ an exact sequence
\[
\begin{tikzcd}
0 \ar[r]& B \ar[r,"\xi_0"]& X_1 \ar[r,"\xi_1"] & X_2 \ar[r,"\xi_2"]&\dots \ar[r,"\xi_{n-1}"]& X_n \ar[r,"\xi_n"]& A \ar[r]& 0.
\end{tikzcd}
\]
To $\mathbf{X}$ we can associate short exact sequences 
\[
\begin{tikzcd}[row sep=0.3em]
0\ar[r]& B=\im \xi_0 \ar[r]& X_1 \ar[r]& \im \xi_1 \ar[r]& 0\\
0 \ar[r]& \im \xi_1 \ar[r]& X_2 \ar[r]& \im \xi_2 \ar[r]& 0\\
&&\vdots&&\\
0\ar[r]& \im \xi_{n-1} \ar[r]& X_n \ar[r]& \im \xi_n= A\ar[r]& 0 
\end{tikzcd}
\]
which are associated to distinguished triangles; therefore, we can consider a map 
\[
A \to \im \xi_{n-1}[1]\to \dots \to \im \xi_2[n-2] \to \im \xi_1[n-1] \to B[n].\]
We need to show that if $(\mathbf{X}, \xi)$ and $(\mathbf{Y},\eta)$ give the same $n$-extension, then the associated map $A\to B[n]$ obtained is the same. Without loss of generality, assume there is a family of morphisms $\psi$ as in Definition \ref{defn:extn}. Then for each $i\in \lbrace 0, \dots, n-1\rbrace$ we have
\begin{equation*}
\begin{tikzcd}
\im \xi_i \ar[r]\ar[d,"\varphi_i"] & X_{i+1} \ar[r]\ar[d,"\psi_{i+1}"]& \im \xi_{i+1} \ar[r]\ar[d,"\varphi_{i+1}"]&\im \xi_i[1]\arrow{d}{\varphi_i[1]}\\
\im \eta_i \ar[r] & Y_{i+1} \ar[r]& \im \eta_{i+1} \ar[r]&\im \eta_i[1]
\end{tikzcd}
\end{equation*}
where $\varphi_i$ is obtained by the universal property of the kernel. In order to prove that the middle square is commutative, we notice that 
\begin{equation*}
\begin{split}
X_{i+1} \to \im \xi_{i+1} \to \im \eta_{i+1} \hookrightarrow Y_{i+2}&= X_{i+1} \to \im \xi_{i+1} \to X_{i+2} \to Y_{i+2}\\& = X_{i+1} \to Y_{i+1} \to Y_{i+2}\\& = X_{i+1} \to Y_{i+1} \to \im \eta_{i+1} \hookrightarrow Y_{i+2},
\end{split}
\end{equation*}
so $X_{i+1} \to \im \xi_{i+1} \to \im \eta_{i+1}=X_{i+1} \to Y_{i+1} \to \im \eta_{i+1}$. Since $\varphi_{i+1}$ is the only one making the middle square commutative by the universal property of the cokernel, TR3 entails that also the right-hand square is commutative.

We obtain a commutative diagram 
\[
\begin{tikzcd}
A\ar[d,"\varphi_n"] \ar[r]&\im \xi_{n-1}[1] \arrow{d}{\varphi_{n-1}[1]} \ar[r]& \dots \ar[r]& \im \xi_{2}[n-2] \arrow{d}{\varphi_{2}[n-2]}\ar[r]& \im \xi_{1}[n-1] \arrow{d}{\varphi_{1}[n-1]}\ar[r] &B[n]\arrow{d}{\varphi_0 [n]}\\
A\ar[r]&\im \eta_{n-1}[1] \ar[r]& \dots \ar[r]& \im \eta_{2}[n-2] \ar[r]& \im \eta_{1}[n-1] \ar[r] &B[n]
\end{tikzcd}
\]
where $\varphi_n=\id$ and $\varphi_0=\id$, so that the rows are in fact the same map. This gives the well-definition of every $f_{n,A,B}$. 
\begin{enumerate}
\item Let us consider a map $\alpha:A \to B[n]$ factoring through $A=C_n \to C_{n-1}[1] \to \dots \to C_1[n-1]\to C_0[n]=B[n]$. To any $C_{i}[-1] \to C_{i-1}$, we can associate a cone, which is in $\mathcal{A}$ by Theorem \ref{thm:dyer}. Let us call such cone $X_i$. We have the following short exact sequences: $0 \to C_{i-1} \to X_i \to C_i\to 0$. Since $C_{i}$ is also the kernel of $X_{i+1} \to C_{i+1}$, we manage to create  an exact sequence
\[
0\to B \to X_1 \to X_2 \to \dots \to X_n \to A \to 0.
\]
It is easy to notice that such exact sequence is associated to the map $\alpha: A\to B[n]$ via $f_{n,A,B}$.
\item In the case of $\ext^n$ and $\ext^m$ with $n,m>0$, the Yoneda product is sent to composition with a reasoning similar to item 1. Therefore, it suffices to show it is true when either $m$ or $n$ is zero.  
First, we recall that $f_{1,A,B}$ is exactly the map considered in Theorem \ref{thm:dyer}, which is a natural transformation for both entries. So \eqref{eq:Xg} can be translated to
\begin{equation}\label{eq:Xg1}
\begin{tikzcd}
B \ar[r] \ar[d,"g"] & X_1 \ar[r]\ar[d]&K\ar[r,"h"]\ar[d,"\id"] & B[1]\arrow{d}{g[1]}\\
C \ar[r] & g \cdot X_1 \ar[r] & K \arrow{r}{g[1] f} & C[1]
\end{tikzcd}
\end{equation} in $\T$. Let us prove that $f_{n,A,-}$ is a natural transformation, the proof of $f_{n,-,B}$ being dual.
For a general $n$-extension 
\[
\mathbf{X}:0 \to B \to X_1 \to X_2 \to \dots \to X_n \to A \to 0
\]
and $g:B \to C$, the map $A \to C[n]$ associated to $g \cdot \mathbf{X}$ factors through $K[n-1]\to C[n]$, where $K= \im (g\cdot X_1 \to X_2)= \im(X_1\to X_2)$, according to \eqref{eq:Xg1}. Furthermore, the same diagram shows that $K \to C[1]$ is obtained as a composition $K \to B[1] \to C[1]$, where the latter morphism is $g[1]$. Therefore, $A \to C[n]$ can be written as the composition of $A \to B[n]$, obtained by $\mathbf{X}$, and $g[n]: B[n] \to C[n]$, as wanted.

%In other words, we have considered the cokernel $K$ of $B \to X_1$ and applied the Yoneda product to $\ext^{n-1}(A,K) \times ( \ext^1 (K,B)\times \hom (B,C))$. The map obtained as $A \to C[n]$ factors through $A \to K[n-1]$. Now, $K[n-1] \to C[n]$ factors through $B[n]$ by the case of $\ext^1$ studied above, so that $A \to C[n]$ factors through $B[n]$ via $g[n]:B[n] \to C[n]$ as wanted. 

Finally, $f_{n,-,-}$ is a natural transformation for both entries $A$ and $B$. Moreover, $f_{n,A,B}$ is a group homomorphism since the Baer sum of two extensions is given by Yoneda products as explained in Definition \ref{defn:extn}.
\item We want to show that the zero map $A \to B[n]$ is associated to only one equivalence class of extensions, the trivial one, whenever $f_{n-1,A,X}$ is an isomorphism for any $X \in \mathcal{A}$. 

Let us consider 
\[
\mathbf{X}:0 \to B \to X_1 \to X_2 \to \dots \to X_n \to A \to 0\] 
such that $f_{n,A,B}(\mathbf{X})=0$ and the associated factorization
\[
A \to C_{n-1}[1] \to \dots \to C_2[n-2] \to C_1[n-1] \to B[n].
\]
We have the following diagram, where the rows are distinguished triangles:
\begin{equation}
\label{eq:x1}
\begin{tikzcd}
A\ar[r,"0"]\ar[d] & B[n]\ar[r]\ar[d,"\id"] & B[n]\oplus A[1] \ar[r] \ar[d] & A[1]\ar[d]\\
C_1[n-1] \ar[r]&B[n]\ar[r]&X_1[n]\arrow{r}{g[n]}&C_1[n]
\end{tikzcd}
\end{equation}
Now we pick the map $A[1] \to B[n] \oplus A[1] \to X_1[n]$. Since $f_{n-1,A,X_1}$ is a surjective, we get that $A\to X_1[n-1]$ is associated to an exact sequence
\[
\mathbf{Y}: 0 \to X_1 \to Y_1 \to \dots \to Y_{n-1} \to A \to 0.
\]
Composing $\mathbf{Y}$ with $0 \to B \to X_1 \oplus B \to X_1 \to 0$, we have the following:
\begin{equation}
\label{eq:xy}
\begin{tikzcd}
0 \ar[r]&B\arrow{r}{\left(\begin{smallmatrix}0\\ \id \end{smallmatrix}\right)}\ar[d,"\id"]& X_1 \oplus B \ar[r]\arrow{d}{(\id, \iota)}& Y_1 \ar[r] & \dots \ar[r]& Y_{n-1}\ar[r]&A\ar[r]\ar[d,"\id"] & 0\\
0 \ar[r]&B\ar[r,"\iota"]&X_1\ar[r]&X_2 \ar[r]&\dots \ar[r]&X_n \ar[r]&A\ar[r]&0.
\end{tikzcd}
\end{equation}
We want to prove there are maps $Y_i \to X_{i+1}$ making every square of the diagram above commutative. It suffices to consider the sequences starting at $X_1$ and $C_1$ respectively (remember that $C_1$ is the image of $X_1 \to X_2$). The Yoneda product of $\mathbf{Y}$ and $g:X_1 \to C_1$ gives us $g \cdot \mathbf{Y}$, whose associated map $A\to X_1[n-1] \to C_1[n-1]$ factors as $A \to C_{n-1}[1] \to \dots \to C_1[n-1]$ because of the right-hand commutative square in \eqref{eq:x1}. Since $f_{n-1,A,C_1}$ is injective by assumption, we know that $g \cdot \mathbf{Y}$ is in the same equivalence class of
\[
0 \to C_1 \to X_2 \to \dots \to X_n \to A \to 0 .
\]
Therefore, we can assume, up to equivalence, that $\mathbf{X}$ is in fact
\[
0 \to B \to X_1 \to g \cdot Y_1 \to Y_2 \to \dots \to Y_{n-1}\to A \to 0.
\]
With this assumption, \eqref{eq:xy} can be completed with maps $Y_i \to X_{i+1}$ as wanted: the first morphism is given according to \eqref{eq:Xg}, while all the others are the identity. It remains to show that the equivalence class of
\[
0 \to B \to X_1 \oplus B \to Y_1 \to \dots \to Y_{n-1} \to A\to 0
\]
is the one associated to 0, which is obvious because the diagram
\[
\begin{tikzcd}[column sep=scriptsize]
0 \ar[r]&B\ar[d,"\id"]\arrow{r}{\left(\begin{smallmatrix}0\\ \id \end{smallmatrix}\right)}& X_1 \oplus B \ar[r]\ar[d,"(0\ \id)"]& Y_1 \ar[r]\ar[d] & \dots \ar[r]&Y_{n-2}\ar[r]\ar[d] &Y_{n-1}\ar[r,"\pi"]\ar[d,"\pi"]&A\ar[r]\ar[d,"\id"] & 0\\
0 \ar[r]&B\ar[r,"\id"]&B\ar[r]&0 \ar[r]&\dots \ar[r]&0\ar[r]&A \ar[r,"\id"]&A\ar[r]&0
\end{tikzcd}
\]
commutes.
\item Let $g_{n,A,B}$ as in the statement and assume by induction that $g_{m,C,D}=f_{m,C,D}$ for any $m<n$ and $C,D \in \mathcal{A}$. We consider $\mathbf{X} \in \ext^n(A,B)$ given by 
\[
0 \to B \to X_1 \overset{\xi_1}{\to} X_2 \to \dots \to X_n \to A \to 0.\]
Such an extension can be split into two shorter extensions: 
\begin{equation*}
\begin{split}
\mathbf{X}_1:&\quad 0 \to B \to X_1 \to \coker (\xi_1) \to 0\\
\mathbf{X}_2:&\quad 0 \to \coker (\xi_1) \to X_2 \to \dots \to X_n \to A \to 0.
\end{split}
\end{equation*}
Moreover, $\mathbf{X}_1 \cdot \mathbf{X}_2=\mathbf{X}$. As $g_{n,A,B}$ sends Yoneda product to composition, we have
\begin{equation*}
\begin{split}
g_{n,A,B}(\mathbf{X})&= g_{n,A,B}(\mathbf{X}_1 \cdot \mathbf{X}_2) \\
&= g_{1,\coker(\xi_1),B}(\mathbf{X}_1) \circ g_{n-1,A,\coker(\xi_1)}(\mathbf{X}_2)\\
&= f_{1,\coker(\xi_1),B}(\mathbf{X}_1) \circ f_{n-1,A,\coker(\xi_1)}(\mathbf{X}_2)\\
&= f_{n,A,B}(\mathbf{X}_1 \cdot \mathbf{X}_2)= f_{n,A,B}(\mathbf{X}).
\end{split}
\end{equation*}
\end{enumerate}
\end{proof}
\begin{rem}\label{rem:heart1}
By Proposition \ref{prop:yonext}, for any exact subcategory $\mathcal{A} \subset \T$ as in Dyer's Theorem \ref{thm:dyer}, it holds that $\ext^2(A,B) \subset \hom(A,B[2])$ for any $A,B \in \mathcal{A}$.
In case $\mathcal{A}$ is a heart, $\dim_{\mathcal{T}}(\mathcal{A})\le 1$ implies that $\dim \mathcal{A}\le 1$.
\end{rem}
\begin{defn}\label{def:allext}
Let $\mathcal{T}$ be a triangulated category and $\mathcal{A}$ an exact subcategory as in Dyer's Theorem \ref{thm:dyer}.
We say that $\mathcal{T}$ \emph{has all the Ext groups of $\mathcal{A}$} if the morphism $f_{n,A,B}$ defined in Proposition \ref{prop:yonext} is an isomorphism for any $A,B \in \mathcal{A}$ and all $n \in \mathbb{N}$.
\end{defn}

\begin{cor}\label{cor:extgroups}
A triangulated category $\mathcal{T}$ has all the Ext groups of an exact subcategory $\mathcal{A}$ as in Dyer's Theorem \ref{thm:dyer} if and only if for every map $A \to B[n]$ there exists a factorization
\[
A \to C_{n-1}[1] \to \dots \to C_1[n-1] \to B[n]
\]
with $C_i \in \mathcal{A}$ for $i \in \lbrace 1, \dots , n-1\rbrace $.
In particular, if $\mathcal{A}$ is a heart and $\dim_{\T}\mathcal{A}\le 1$, then $\T$ has all the Ext groups of $\mathcal{A}$ and $\dim\mathcal{A}=\dim_{\T}\mathcal{A}$.
\end{cor}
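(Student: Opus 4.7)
The plan is to deduce both assertions directly from Proposition \ref{prop:yonext}, the bulk of the work being already encoded there. The equivalence in the first half is essentially a combination of item (1) of that proposition (which identifies the image of $f_{n,A,B}$ with the morphisms admitting the claimed factorization) and item (3) (which bootstraps injectivity from surjectivity in lower degree); the base of the induction is supplied by Dyer's Theorem \ref{thm:dyer}.

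Forward implication: if $\T$ has all the Ext groups of $\mathcal{A}$, then by definition every $f_{n,A,B}$ is surjective, so any morphism $A \to B[n]$ lies in the image of $f_{n,A,B}$ and hence, by Proposition \ref{prop:yonext}(1), admits a factorization $A \to C_{n-1}[1] \to \dots \to C_1[n-1] \to B[n]$ with $C_i \in \mathcal{A}$. Backward implication: I would prove by induction on $n$ that $f_{n,A,B}$ is an isomorphism for every $A,B \in \mathcal{A}$. The case $n=0$ is trivial since $\mathcal{A}$ is a full subcategory of $\T$, and the case $n=1$ is Dyer's theorem. For $n \ge 2$, surjectivity of $f_{n,A,B}$ follows at once from the factorization hypothesis together with Proposition \ref{prop:yonext}(1); injectivity follows from Proposition \ref{prop:yonext}(3), whose hypothesis is exactly that $f_{n-1,A,B}$ is an isomorphism for every $B \in \mathcal{A}$, supplied by the inductive step.

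For the ``in particular'' statement, suppose $\mathcal{A}$ is a heart with $\dim_{\T}\mathcal{A} \le 1$. Then $\hom_{\T}(A,B[n]) = 0$ for all $A,B \in \mathcal{A}$ and all $n \ge 2$, so the unique map $A \to B[n]$ is the zero map, which trivially factors through the chain $A \to 0[1] \to \dots \to 0[n-1] \to B[n]$ (with $0 \in \mathcal{A}$). In degrees $n=0,1$ no intermediate objects are required. Therefore the factorization hypothesis holds for all $n$, and the first half of the corollary gives that $\T$ has all the Ext groups of $\mathcal{A}$. The resulting isomorphisms $\ext^n(A,B) \cong \hom_{\T}(A,B[n])$ immediately yield $\dim \mathcal{A} = \dim_{\T}\mathcal{A}$.

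There is no real obstacle here beyond bookkeeping; the only subtle point is making sure the induction is set up so that at step $n$ one knows $f_{n-1,A,B}$ is an isomorphism \emph{uniformly in $B \in \mathcal{A}$}, which is exactly the form in which Proposition \ref{prop:yonext}(3) is phrased.
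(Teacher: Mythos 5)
Your proof is correct and follows essentially the same route as the paper: both directions of the main equivalence are deduced from items (1) and (3) of Proposition \ref{prop:yonext} exactly as you describe, with Dyer's theorem seeding the induction. For the final assertion the paper instead cites Remark \ref{rem:heart1} (which records that $\ext^2(A,B)\subset\hom(A,B[2])$), while you give the slightly more self-contained argument that a vanishing $\hom$-group trivially admits the required factorization through zero objects; both are valid and amount to the same observation.
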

\begin{proof}
The only if part is obvious: if $f_{n,A,B}$ is an isomorphism, then the image of such map contains all morphisms $A \to B[n]$: item 1 of Proposition \ref{prop:yonext} concludes. 

Conversely, item 1 of Proposition \ref{prop:yonext} shows that $f_{n,A,B}$ is surjective. By Theorem \ref{thm:dyer}, $f_{1,A,B}$ is an isomorphism: we obtain that $f_{2,A,B}$ is injective according to item 3 of Proposition \ref{prop:yonext}. An induction proves that this holds for every $n$.

Using Remark \ref{rem:heart1} and Theorem \ref{thm:dyer}, we prove the last part of the statement.
\end{proof}
\begin{cor}\label{cor:allext}
Let $\T$ be a triangulated category with a semiorthogonal decomposition $\T=\langle \T_1 , \T_2 \rangle$ and two compatible t-structures $\T_1^{\le 0}$ and $\T_2^{\le 0}$ on $\T_1$ and $\T_2$ respectively. We denote with $\mathcal{A}_i$ the heart associated to $\T_i^{\le 0}$. By Theorem \ref{thm:decheart}, we obtain the heart 
\[\mathcal{A}= \mathcal{A}_2 *\mathcal{A}_1[1].
\]
We consider the following hypotheses:
\begin{enumerate}
\item $\T_i$ has all the Ext groups of $\mathcal{A}_i$;
\item $\hom_{\T}(A,B[m])\cong \ext_{\mathcal{A}}^m(A,B)$ for every $A \in \mathcal{A}_1[1]$ and $B \in \mathcal{A}_2$.
\end{enumerate}
Then $\T$ has all the Ext groups of the heart $\mathcal{A}$.
\end{cor}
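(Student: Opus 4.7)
Plan. By Corollary \ref{cor:extgroups}, it suffices to prove that $f^\mathcal{A}_{n, A, B}$ is an isomorphism for every $A, B \in \mathcal{A}$ and every $n \geq 0$. I induct on $n$; the cases $n = 0, 1$ are standard (fullness and Dyer's Theorem \ref{thm:dyer}), so fix $n \geq 2$ and assume the isomorphism at all degrees $< n$. Proposition \ref{prop:yonext} item 3 combined with this hypothesis immediately yields injectivity of $f^\mathcal{A}_{n, A, B}$ for every $A, B \in \mathcal{A}$; what remains is surjectivity.

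For surjectivity, the main tool is the pair of short exact sequences $0 \to A_2 \to A \to A_1[1] \to 0$ and $0 \to B_2 \to B \to B_1[1] \to 0$ in $\mathcal{A}$ given by Theorem \ref{thm:decheart} item 2 together with Lemma \ref{lem:sesdisth}. These produce compatible long exact sequences in $\hom_\T(-, -[\bullet])$ (from the underlying distinguished triangles) and in $\ext^\bullet_\mathcal{A}(-, -)$ (from the short exact sequences in the heart), with vertical maps $f^\mathcal{A}$ matching by the naturality and Yoneda-composition compatibility asserted in Proposition \ref{prop:yonext} item 2. The plan is to reduce the surjectivity of $f^\mathcal{A}_{n, A, B}$ for general $A, B$ to the four \emph{building block} pairs in which each of $A, B$ lies in $\mathcal{A}_2$ or in $\mathcal{A}_1[1]$.

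These building blocks are handled directly: $(\mathcal{A}_1[1], \mathcal{A}_2)$ is hypothesis (2); $(\mathcal{A}_2, \mathcal{A}_2)$ and $(\mathcal{A}_1[1], \mathcal{A}_1[1])$ follow from hypothesis (1) applied to $\T_2$ and $\T_1$ respectively (the latter after shifting by $[1]$), because hypothesis (1) combined with Corollary \ref{cor:extgroups} guarantees that every map $A \to B[n]$ in $\T_i$ factors through $\mathcal{A}_i$, hence through $\mathcal{A}$; and for $(\mathcal{A}_2, \mathcal{A}_1[1])$ semiorthogonality gives $\hom_\T(A, B[n]) = 0$, so the injectivity already established forces $\ext^n_\mathcal{A}(A, B) = 0$. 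The reduction from these building blocks to general $A, B$ proceeds by a diagram chase through the morphism of long exact sequences, in the style of the five lemma. The main bookkeeping hurdle, which I expect to require the most careful sequencing, is that the chase also needs iso of $f^\mathcal{A}$ at certain building block pairs at degree $n + 1$: cases involving hypothesis (2) supply iso at every degree for free, whereas the cases that use hypothesis (1) must be bootstrapped by first resolving the intermediate ``$A$ building block, $B$ general'' situation at degree $n$ (via the same LES chase, which there only consumes iso at degree $n$ for building blocks and hypothesis (2) at degree $n+1$) and then invoking Proposition \ref{prop:yonext} item 3 to promote that iso at $n$ to injectivity at $n+1$.
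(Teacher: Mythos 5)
Your proposal is correct and follows essentially the same strategy as the paper: an induction on the Ext degree, reducing to the four building-block pairs, then propagating via the morphism of long exact sequences and a five-lemma chase, with semiorthogonality killing the $(\mathcal{A}_2, \mathcal{A}_1[1])$ terms and item 3 of Proposition \ref{prop:yonext} supplying injectivity one degree above from the established isomorphisms. Your bookkeeping of which degrees are consumed where (hypothesis (2) supplies $n+1$ for free; the hypothesis (1) blocks need only degree $n$ once you sequence the chase as ``$A$ building block, $B$ general'' before ``$A$ general, $B$ general'') is accurate, and in fact more explicit than the paper's, which leaves most of this implicit.

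The one place where you genuinely deviate is the treatment of the blocks $(\mathcal{A}_2,\mathcal{A}_2)$ and $(\mathcal{A}_1[1],\mathcal{A}_1[1])$. The paper first establishes, as a standalone lemma, that $\ext^m_{\mathcal{A}}(A,B) \cong \ext^m_{\mathcal{A}_i}(A,B)$ whenever $A,B \in \mathcal{A}_i$, by applying the adjoint $\sigma_i$ of the inclusion $\T_i \hookrightarrow \T$ to an $n$-extension and noting it maps to an equivalent extension inside $\mathcal{A}_i$; hypothesis (1) then directly hands over the isomorphism at every degree. You instead go through Corollary \ref{cor:extgroups}: hypothesis (1) means maps in $\T_i$ factor through $\mathcal{A}_i$, hence through $\mathcal{A}$, so $f^{\mathcal{A}}_{n,A,B}$ is surjective, and injectivity comes from the outer induction via item 3. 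Both are valid; yours avoids constructing the Ext-group isomorphism explicitly at the cost of only getting surjectivity from hypothesis (1), which you then have to pair with the injectivity bootstrap. The paper's identification lemma yields the full isomorphism at all degrees immediately, which slightly shortens the subsequent chase; your route is arguably more elementary since it stays at the level of the criterion from Corollary \ref{cor:extgroups} and never invokes the adjoint functor.
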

\begin{proof}
Before starting the actual proof, let us remark that $\ext^m_{\mathcal{A}}(A,B) = \ext^m_{\mathcal{A}_2} (A,B)$ whenever $A,B \in \mathcal{A}_2$. Indeed, let 
\[
\mathbf{X}: 0 \to B \to X_1 \to X_2 \to \dots \to X_n \to A \to 0
\]
be an extension in $\mathcal{A}$ with $A,B \in \mathcal{A}_2$ and let $\sigma_2 : \T \to \T_2$ be the right adjoint of the inclusion functor $\iota: \T_2 \to \T$. Then we get 
\[
\begin{tikzcd}
\iota\sigma_2 \mathbf{X}: & 0 \ar[r]&B\ar[d,"\id"]\ar[r] & \iota \sigma_2 X_1 \ar[d]\ar[r] & \dots\ar[r] & \iota \sigma_2 X_n \ar[d]\ar[r] & A \ar[d,"\id"]\ar[r] & 0\\ 
\mathbf{X}:&0 \ar[r]&B\ar[r] & X_1 \ar[r] & \dots \ar[r]& X_n \ar[r] & A \ar[r] & 0 
\end{tikzcd}
\]
which shows that $\iota \sigma_2 \mathbf{X} \cong \mathbf{X}$ in $\ext^m_{\mathcal{A}}(A,B)$ (recall the equivalence relation used to describe the Yoneda extensions in Definition \ref{defn:extn}). Since $\sigma_2\mathbf{X} \in \ext^m_{\mathcal{A}_2}(A,B)$, we conclude that $\iota$ gives an isomorphism between $\ext^m_{\mathcal{A}_2}(A,B)$ and $\ext^m_{\mathcal{A}}(A,B) $ whenever $A,B \in \mathcal{A}_2$. In a similar way, considering the left adjoint of the inclusion $\T_1 \to \T$, one can prove that $\ext^m_{\mathcal{A}}(A,B)= \ext^m_{\mathcal{A}_1[1]}(A,B)$ if $A,B \in \mathcal{A}_1[1]$.

Given $A,B \in \mathcal{A}$, we consider two distinguished triangle $A_2 \to A \to A_1 \to A_2[1]$ and $B_2 \to B \to B_1 \to B_2[1]$ with $A_2,B_2 \in \mathcal{A}_2$ and $A_1 , B_1 \in \mathcal{A}_1[1]$. We obtain the following hom-exact sequences
\[
\begin{tikzcd}[row sep=tiny,column sep=1.7em]
\dots \ar[r]& \hom(A_1,B[m]) \ar[r]& \hom(A,B[m]) \ar[r]& \hom (A_2,B[m]) \ar[r]& \dots\\
\dots \ar[r]& \hom(A_1,B_2[m]) \ar[r]& \hom(A_1,B[m]) \ar[r]& \hom(A_1 , B_1[m]) \ar[r]& \dots\\
\dots \ar[r]& \hom(A_2, B_2[m]) \ar[r]& \hom(A_2,B[m]) \ar[r]& \hom(A_2 , B_1[m])=0 \ar[r]& \dots
\end{tikzcd}
\]
By Proposition \ref{prop:yonext}, these exact sequences have maps from the Ext groups. We proceed by induction on $m$. From the induction hypothesis and item 3 of Proposition \ref{prop:yonext} we deduce that \[\ext^m_{\mathcal{A}}(A_2,B_1) \subseteq \hom(A_2,B_1[m])=0.\] Therefore, hypothesis 1 and five lemma entails that $\hom(A_2,B[m])\cong \ext_{\mathcal{A}}^m(A_2,B)$. The second row proves that $\hom(A_1,B[m])\cong \ext_{\mathcal{A}}^m(A_1,B)$ using both hypotheses and five lemma. From the first row, we conclude that $\hom(A,B[m])\cong \ext_{\mathcal{A}}^m(A,B)$.
\end{proof}

\begin{prop}\label{prop:der}\emph{\cite[Propositions XI.4.7 and 4.8]{iversen}.}
In the case of $\der(\mathcal{A})$, the map $f_{n,A,B}:\ext^n(A,B) \to \hom(A,B[n])$ above is exactly the classical one, that associates to each $\mathbf{X} \in \ext^n(A,B)$ the map given by the composition of the inverse of the quasi-isomorphism
\[
(0 \to B \to X_1 \to \dots \to X_n \to 0) \to A
\]
(the left-hand complex is such that $X_n$ is at level 0) and the morphism
\[
(0 \to B \to X_1 \to \dots \to X_n \to 0) \to B[n].
\]
In particular, in the case of $\der(\mathcal{A})$ every $f_{n,A,B}$ is an isomorphism.
\end{prop}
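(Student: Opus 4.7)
My plan is to exploit the uniqueness statement from item 4 of Proposition \ref{prop:yonext}. Denote by $g_{n,A,B} : \ext^n(A,B) \to \hom_{\der(\mathcal{A})}(A,B[n])$ the classical map described in the statement, namely the composition in $\der(\mathcal{A})$ of the inverse of the quasi-isomorphism $(0 \to B \to X_1 \to \dots \to X_n \to 0) \to A$ with the obvious chain map $(0 \to B \to X_1 \to \dots \to X_n \to 0) \to B[n]$. The strategy is to prove that the $g_{n,A,B}$ satisfy the hypotheses of Proposition \ref{prop:yonext} item 4, which forces $g_{n,A,B} = f_{n,A,B}$, and then invoke the classical fact that $g_{n,A,B}$ is bijective.

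First I would check that $g_{n,A,B}$ is well defined on equivalence classes. This reduces to showing that the roof depends only on the Yoneda equivalence class, which is immediate by applying the calculus of fractions to the family of morphisms $\psi$ appearing in Definition \ref{defn:extn}: any such $\psi$ gives a morphism of complexes extending the identities on $B$ and $A$, hence produces a commutative diagram of quasi-isomorphisms. This is exactly \cite[Proposition XI.4.7]{iversen}, so I would simply cite Iversen at this point.

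Next I would verify the two hypotheses of Proposition \ref{prop:yonext} item 4. For $n=1$, Dyer's isomorphism sends a conflation $0 \to B \to X \to A \to 0$ to the connecting morphism $A \to B[1]$ of the associated distinguished triangle. On the other hand, $g_{1,A,B}$ sends the same conflation to the map $A \to B[1]$ obtained as the composition of $(0 \to B \to X \to 0) \to B[1]$ with the inverse of the quasi-isomorphism $(0 \to B \to X \to 0) \to A$; but this is exactly one of the standard descriptions of the connecting map of the triangle $B \to X \to A \to B[1]$, so $g_{1,A,B}$ agrees with Dyer's map. Compatibility of $g$ with the Yoneda product is the classical verification that concatenation of extensions corresponds to composition of roofs in $\der(\mathcal{A})$; concretely, if $\mathbf{X}$ ends at $A$ and $\mathbf{Y}$ starts at $B$, then splicing produces a single complex whose two obvious quasi-isomorphisms and shift map factor through the intermediate object $B$, yielding the composition $g_{n,A,B}(\mathbf{X}) \circ g_{m,B,C}(\mathbf{Y})[n]$ up to sign. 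This step is the main obstacle: one must be careful with the calculus of fractions and with the special cases where $n$ or $m$ is zero (where the Yoneda product is defined via pushouts or pullbacks rather than concatenation), but each case reduces to a direct diagram chase in $\der(\mathcal{A})$, essentially \cite[Proposition XI.4.8]{iversen}.

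Having established these two properties, item 4 of Proposition \ref{prop:yonext} yields $g_{n,A,B} = f_{n,A,B}$ for all $n$, $A$, $B$. Finally, to conclude that every $f_{n,A,B}$ is an isomorphism, I would appeal to the classical computation of derived Hom via Yoneda extensions; this is again covered by Iversen's result, but it also follows directly from standard truncation arguments in $\der(\mathcal{A})$: surjectivity follows from item 1 of Proposition \ref{prop:yonext} together with the fact that any morphism $A \to B[n]$ in $\der(\mathcal{A})$ can be represented by a roof whose middle term is a complex concentrated in degrees $-n+1, \dots, 0$ with cohomology $A$ in degree $0$ and $B$ in degree $-n+1$, and injectivity follows by inducting with item 3 of Proposition \ref{prop:yonext} starting from the base case $n=1$, which is Dyer's theorem.
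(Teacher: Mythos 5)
Your proof takes essentially the same approach as the paper: both reduce the identification of $f_{n,A,B}$ with the classical map to the uniqueness statement in item 4 of Proposition \ref{prop:yonext}, citing Iversen for the classical facts and for the isomorphism claim. You spell out in more detail the verification that the classical map $g_{n,A,B}$ satisfies the two hypotheses of item 4 (agreement with Dyer's map in degree $1$, and compatibility with Yoneda product), whereas the paper simply asserts this is a direct consequence; your extra detail is welcome but not a different route.

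One small caution: in the Yoneda-product-to-composition verification, the composite should be $g_{m,B,C}(\mathbf{Y})[n] \circ g_{n,A,B}(\mathbf{X})$, i.e.\ $\mathbf{X}$ is applied first as a map $A \to B[n]$ and then $\mathbf{Y}[n]$ as a map $B[n] \to C[n+m]$; you wrote the factors in the reverse order. You correctly flag that sign conventions need care, which is where the devil lies in these identifications, but the paper sidesteps this entirely by citing Iversen.
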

\begin{proof}
This is a direct consequence of item 4 of Proposition \ref{prop:yonext}. %Notice that $ (0 \to B \to X_1 \to \dots \to X_{n-1}\to X_n \to 0)\to(0 \to B \to X_1 \to \dots \to X_{n-1} \to 0\to 0 )$ gives in fact a map $A \to C_{n-1}[1]$, where 
%\[C_{n-1}=\frac{X_{n-1}}{\im (X_{n-2} \to X_{n-1})} = \frac{X_{n-1}}{\ker(X_{n-1} \to X_n)}=\im(X_{n-1} \to X_n).\]
%With finitely many analogous passages, the statement is proven. 
The last sentence is a classical result; see, for instance, \cite[Proposition XI.4.8]{iversen}.
\end{proof}
\noindent {\large \textbf{Data Availability Statement}}\hspace{2ex} NA

\phantomsection
\addcontentsline{toc}{section}{\refname}
\printbibliography

\end{document}